
\documentclass{scrartcl}

\usepackage{amssymb,amsmath,amsfonts,amsthm,authblk,enumerate}
\usepackage{tikz}
\usepackage{mathrsfs,hyperref}

\usetikzlibrary{decorations.text}
\usetikzlibrary{decorations}
\usetikzlibrary{snakes}
\tikzset{
    position/.style args={#1:#2 from #3}{
        at=(#3.#1), anchor=#1+180, shift=(#1:#2)
    }
}

\theoremstyle{plain}
\newtheorem{Definition}{Definition}
\numberwithin{Definition}{section}

\theoremstyle{plain}
\newtheorem{Theorem}[Definition]{Theorem}
\newtheorem{Lemma}[Definition]{Lemma}

\theoremstyle{definition}

\newcommand{\N}{\mathbb N}

\title{On Chordal Graph and Line Graph Squares}
\author[1]{Robert Scheidweiler}
\author[2]{Sebastian Wiederrecht}
\affil[2]{TU Berlin}
\affil[1]{RWTH Aachen University}
\date{ }

\begin{document}

\maketitle

\setcounter{page}{1}

 \begin{abstract}
 In this work we investigate the chordality of squares and line graph squares of graphs. We prove a sufficient condition for the chordality of squares of graphs not containing induced cycles of length at least five. Moreover, we characterize the chordality of graph squares by forbidden subgraphs. Transferring that result to line graphs allows us to characterize the chordality of line graph squares.\\
 
 \smallskip
 \noindent \textbf{Keywords.} chordal graph, graph square, line graph
 \end{abstract}

\section{Introduction}
A graph $G=(V,E)$ is said to be chordal if every cycle $C$ of length $n\geq 4$ in $G$ has a chord, i.e., there is an edge $e\in E$ connecting two nonconsecutive vertices of the cycle. Here, as usual, a cycle of length $n$ is defined as alternating vertex edge sequence $(v_1,e_1,v_2,e_2, \dots,v_n, e_n)$ such that $e_{i}\cap e_{i+1 \mod n}=\{v_i\}.$ Due to their strong combinatorial properties, chordal graphs are one of the most extensively studied graph classes in Graph Theory and Discrete Optimization. In order to convey the power of chordality respectively the properties of chordal graphs to other graph classes it seems natural to consider different types of graph transformations and their impact on chordality. Here, we investigate graph powers (mostly squares) and line graphs. 
Let in the following always a graph $G=(V,E)$ be given. We define the $k$-th graph power of $G$ by
\[
G^k:=\left( V,\{xy \mid x,y\in V \text{ and }1\leq\text{dist}_{G}(x,y)\leq k \}\right).
\]
$\text{dist}_{G}(x,y)$ denotes the length of a shortest path between $x $ and $y.$
Now, obvious questions concerning graph powers and chordality are: 
\begin{itemize}
\item Is the $k$-th power $G^k$ of a graph $G$ chordal?
\item Is the $k$-th power $G^k$ of a chordal graph $G$ chordal?  
\end{itemize}

Both questions were already discussed in the literature and in Section \ref{Sec:chordalsquares} we summarize several results which are strongly related to our work. After that, we will give a sufficient condition for the chordality of a graph square and characterize the class of graphs with chordal squares. 
In Section \ref{Sec:linegraphsquares} we characterize the chordality of line graph squares. The line graph of a graph $G$ is given by $$L(G):=(E, \{ef\mid e \text{ and } f \text{ are adjacent}  \}).$$
Clearly, these results have also an impact on algorithms. Namely, we exhibit the classes of graphs for which chordal optimization algorithms work when applied to the square and line graph square. Therefore, e.g., colorings with distance conditions such as strong edge colorings can be found in polynomial time in these classes.

\section{Chordal Squares of Graphs}\label{Sec:chordalsquares}

A first and rather easy to find subclass of graphs with chordal squares is the class of graphs whose components form complete graphs when squared. These graphs can easily be identified using their diameter. The diameter of a connected graph $G$ is given by $$\operatorname{dm}(G):=\max_{v,w\in V(G)}\operatorname{dist}_G(v,w),$$ while the diameter of a general graph is given by the maximum diameter of its components. It is fairly easy to see that all components of $G^{\operatorname{dm}(G)}$ are complete and it is straight forward to show that $\operatorname{dm}(G)$ is in fact the smallest power for which all components of $G$ become complete.

\begin{Lemma}\label{thm5.28}
All components of $G^k$ are complete graphs if and only if $k\geq\operatorname{dm}(G)$.	
\end{Lemma}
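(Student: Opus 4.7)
The plan is to prove the two directions separately, which are both essentially direct consequences of the definitions of graph power and diameter.

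For the ``if'' direction, I would assume $k \geq \operatorname{dm}(G)$ and pick any two vertices $u, v$ lying in the same component $C$ of $G$. By definition of the diameter (taken as the maximum over components), $\operatorname{dist}_G(u,v) \leq \operatorname{dm}(C) \leq \operatorname{dm}(G) \leq k$, so $uv \in E(G^k)$. Since taking powers does not change the vertex set and only adds edges between already-connected vertices, the components of $G^k$ are precisely the components of $G$, and the argument shows each such component is a clique.

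For the ``only if'' direction, I would argue by contrapositive: assume $k < \operatorname{dm}(G)$ and exhibit two vertices in the same component of $G^k$ that are non-adjacent in $G^k$. Choose a component $C$ of $G$ with $\operatorname{dm}(C) = \operatorname{dm}(G)$ and choose $u, v \in V(C)$ with $\operatorname{dist}_G(u,v) = \operatorname{dm}(C)$. Then $u$ and $v$ lie in the same component of $G^k$, but $\operatorname{dist}_G(u,v) > k$, so $uv \notin E(G^k)$. Hence the component of $G^k$ containing $u$ and $v$ is not complete.

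There is no real obstacle here; the only point that deserves a sentence of justification is that the vertex set and the partition into components is preserved under taking powers, so that ``same component of $G$'' and ``same component of $G^k$'' coincide. Once that is observed, both directions follow immediately from the definitions of $\operatorname{dist}_G$, $G^k$, and $\operatorname{dm}(G)$.
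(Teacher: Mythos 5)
Your proof is correct; the paper itself omits a proof of this lemma, merely remarking before its statement that both directions are straightforward, and your argument is exactly the direct verification being alluded to. The one point you rightly single out for justification --- that $G$ and $G^k$ have the same partition into components, so the maximizing pair of vertices in the ``only if'' direction indeed lies in one component of $G^k$ --- is handled correctly.
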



In general, chordal graphs are not closed under the taking of powers (compare Figure \ref{fig3.2} a) ). Nevertheless it is possible to show results for odd graph powers. In 1980 Laskar and Shier (see \cite{LaskarShier1980chordal}) showed that $G^3$ and $G^5$ are chordal if $G$ is chordal and they conjectured that every odd power of a chordal graph is chordal. Duchet (see \cite{Duchet1984classical}) proved an even stronger result which led to a number of so called Duchet-type results on graph powers.

\begin{Theorem}\cite{Duchet1984classical}\label{thm3.12}
Let $k\in\N$. If $G^k$ is chordal, so is $G^{k+2}$.
\end{Theorem}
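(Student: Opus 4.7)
The plan is to argue by contradiction: suppose $G^k$ is chordal and $G^{k+2}$ nevertheless contains an induced cycle $C = v_0 v_1 \cdots v_{n-1}$ of length $n \geq 4$ (indices taken modulo $n$); choose $C$ of minimum length. The aim is to produce an induced cycle of length at least $4$ in $G^k$, contradicting its chordality.

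First I would fix, for each $i$, a shortest $v_i v_{i+1}$-path $P_i = (v_i = x_i^0, x_i^1, \ldots, x_i^{d_i} = v_{i+1})$ in $G$. By the definition of the graph power $d_i \leq k+2$, and the induced property of $C$ in $G^{k+2}$ yields $\operatorname{dist}_G(v_a, v_b) \geq k+3$ for any two non-consecutive cycle vertices $v_a, v_b$.

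Next I would replace each edge $v_i v_{i+1}$ of $C$ by a short walk $Q_i$ in $G^k$, obtained by inserting up to two interior vertices of $P_i$:
\begin{itemize}
\item if $d_i \leq k$, take $Q_i = (v_i, v_{i+1})$;
\item if $d_i = k+1$, take $Q_i = (v_i, x_i^{k}, v_{i+1})$, whose successive $G$-distances are $k$ and $1$;
\item if $d_i = k+2$, take $Q_i = (v_i, x_i^{1}, x_i^{k+1}, v_{i+1})$ with successive $G$-distances $1, k, 1$.
\end{itemize}
In all three cases $Q_i$ is a walk in $G^k$. Concatenating the $Q_i$ produces a closed walk $W$ in $G^k$ of length at least $n \geq 4$ in which every vertex lies at $G$-distance at most $1$ from some cycle vertex $v_a$.

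The easy half of the analysis rules out all long-range chords of $W$: any edge in $G^k$ joining a vertex close (in $G$) to $v_a$ with one close to $v_b$ would force $\operatorname{dist}_G(v_a, v_b) \leq k+2$, so $v_a$ and $v_b$ must be consecutive on $C$. Combined with the fact that the shortest-path property of each $P_i$ prohibits chords within a single segment $Q_i$, this confines the possible chords of $W$ to vertices coming from two adjacent segments $Q_i, Q_{i+1}$.

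The main obstacle is in analysing these short-range chords, which really can occur (for example, the neighbour $x_i^{d_i - 1}$ of $v_{i+1}$ along $P_i$ and the neighbour $x_{i+1}^1$ of $v_{i+1}$ along $P_{i+1}$ lie within $G$-distance $2$ of each other, hence form a chord of $W$ whenever $k \geq 2$). The plan is to use each such chord to shorten $W$ into a closed walk of length still at least $4$, and to iterate: either the process terminates at an induced cycle of length $\geq 4$ in $G^k$, contradicting chordality of $G^k$ directly, or the shortcutting can be pulled back through the $P_i$ to locate a strictly shorter induced cycle of length $\geq 4$ in $G^{k+2}$, contradicting the minimal choice of $C$. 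The bookkeeping across the combinations of $(d_{i-1}, d_i, d_{i+1}) \in \{1,\ldots,k+2\}^3$ is the principal technical difficulty.
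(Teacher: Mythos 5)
The paper does not prove this statement at all---Theorem \ref{thm3.12} is quoted from Duchet---so there is no in-paper argument to compare against and your proposal has to stand on its own. Its setup is sound: the $Q_i$ really are walks in $G^k$, the bound $\operatorname{dist}_G(v_a,v_b)\geq k+3$ for non-consecutive cycle vertices is correct, and the elimination of long-range chords (a $G^k$-edge between a vertex within $G$-distance $1$ of $v_a$ and one within $G$-distance $1$ of $v_b$ forces $\operatorname{dist}_G(v_a,v_b)\leq k+2$, hence $a,b$ consecutive) is valid. But everything up to that point is routine, and the step you defer as ``bookkeeping'' is the entire content of the theorem. Concretely: (1) $W$ need not be a cycle---nothing prevents $x_{i-1}^{k+1}=x_i^{1}$ (a common $G$-neighbour of $v_i$ on both shortest paths), in which case $W$ backtracks and ``chords of $W$'' and ``shortening $W$'' are not yet meaningful. (2) You give no argument that the short-range chords cannot collapse $W$ entirely into triangles; the dichotomy ``either an induced cycle of length $\geq 4$ survives in $G^k$ or we obtain a shorter induced cycle in $G^{k+2}$'' is asserted, not proved, and it is precisely what must be established. (3) The fallback branch is doubtful as stated: a chord such as $x_{i-1}^{k+1}x_i^{1}\in E(G^k)$ joins two vertices not on $C$, and substituting one of them for $v_i$ need not even yield a cycle of $G^{k+2}$ (e.g.\ $\operatorname{dist}_G(x_i^{1},v_{i-1})$ is only bounded by $k+3$); moreover, since you minimized the number of vertices of $C$, producing another induced cycle with the same number of vertices but shorter connecting paths contradicts nothing---if this route is to work, the quantity to minimize should be $\sum_i\operatorname{dist}_G(v_i,v_{i+1})$ rather than the length of $C$.

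For orientation: Duchet's own argument bypasses the cycle analysis entirely. Represent the chordal graph $G^k$ as the intersection graph of subtrees $(T_v)_{v\in V}$ of a host tree, and put $T'_v:=\bigcup\{T_w : \operatorname{dist}_G(v,w)\leq 1\}$. Each $T'_v$ is again a subtree, because every $T_w$ in the union meets $T_v$; and $T'_u\cap T'_v\neq\emptyset$ exactly when $\operatorname{dist}_G(u,v)\leq k+2$, so $G^{k+2}$ is a subtree intersection graph and hence chordal. If you insist on a purely metric proof along your lines, expect the residual case analysis to be of the same order as Laskar and Shier's original treatment of $G^3$ and $G^5$; it is feasible, but it is not bookkeeping, and as written your argument has a genuine gap there.
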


\begin{figure}
\begin{center}
\begin{tikzpicture}

\node (center) [inner sep=1.5pt] {};

\node (label) [inner sep=1.5pt,position=135:1.7cm from center] {};

\node (u1) [inner sep=1.5pt,position=90:1.6cm from center,draw,circle,fill] {};
\node (u2) [inner sep=1.5pt,position=162:1.6cm from center,draw,circle,fill] {};
\node (u3) [inner sep=1.5pt,position=234:1.6cm from center,draw,circle,fill] {};
\node (u4) [inner sep=1.5pt,position=306:1.6cm from center,draw,circle,fill] {};
\node (u5) [inner sep=1.5pt,position=18:1.6cm from center,draw,circle,fill] {};

\node (v1) [inner sep=1.5pt,position=126:0.8cm from center,draw,circle] {};
\node (v2) [inner sep=1.5pt,position=198:0.8cm from center,draw,circle] {};
\node (v3) [inner sep=1.5pt,position=270:0.8cm from center,draw,circle] {};
\node (v4) [inner sep=1.5pt,position=342:0.8cm from center,draw,circle] {};
\node (v5) [inner sep=1.5pt,position=54:0.8cm from center,draw,circle] {};

\node (lu1) [position=90:0.07cm from u1] {$u_1$};
\node (lu2) [position=162:0.07cm from u2] {$u_2$};
\node (lu3) [position=234:0.07cm from u3] {$u_3$};
\node (lu4) [position=306:0.07cm from u4] {$u_4$};
\node (lu5) [position=18:0.07cm from u5] {$u_5$};

\node (lv1) [position=126:0.07cm from v1] {$w_1$};
\node (lv2) [position=198:0.07cm from v2] {$w_2$};
\node (lv3) [position=270:0.07cm from v3] {$w_3$};
\node (lv4) [position=342:0.07cm from v4] {$w_4$};
\node (lv5) [position=54:0.07cm from v5] {$w_5$};

\path
(u1) edge (v1)
	 edge (v5)
(u2) edge (v1)
	 edge (v2)
(u3) edge (v2)
	 edge (v3)
(u4) edge (v3)
	 edge (v4)
(u5) edge (v4)
	 edge (v5)
;

\path[bend right] 
(v1) edge (v2)
(v2) edge (v3)
(v3) edge (v4)
(v4) edge (v5)
(v5) edge (v1)
;

\path
(v2) edge (v4)
(v4) edge (v1)
;

\node (glabel) [inner sep=0pt,position=270:30mm from center] {a)};

\node (center2) [inner sep=1.5pt,position=0:6.2cm from center] {};
		
\node (label) [inner sep=1.5pt,position=135:1.7cm from center2] {};
		
\node (u1) [inner sep=1.5pt,position=90:1.8cm from center2,draw,circle,fill] {};
\node (u2) [inner sep=1.5pt,position=180:1.8cm from center2,draw,circle,fill] {};
\node (u3) [inner sep=1.5pt,position=270:1.8cm from center2,draw,circle,fill] {};
\node (u4) [inner sep=1.5pt,position=0:1.8cm from center2,draw,circle,fill] {};
		
\node (v1) [inner sep=1.5pt,position=135:0.9cm from center2,draw,circle] {};
\node (v2) [inner sep=1.5pt,position=225:0.9cm from center2,draw,circle] {};
\node (v3) [inner sep=1.5pt,position=315:0.9cm from center2,draw,circle] {};
\node (v4) [inner sep=1.5pt,position=45:0.9cm from center2,draw,circle] {};

\node (lu1) [position=90:0.07cm from u1] {$u_1$};
\node (lu2) [position=180:0.07cm from u2] {$u_2$};
\node (lu3) [position=270:0.07cm from u3] {$u_3$};
\node (lu4) [position=0:0.07cm from u4] {$u_4$};
		
\node (lv1) [position=135:0.07cm from v1] {$w_1$};
\node (lv2) [position=225:0.07cm from v2] {$w_2$};
\node (lv3) [position=315:0.07cm from v3] {$w_3$};
\node (lv4) [position=45:0.07cm from v4] {$w_4$};
		
\path
(u1) edge (v1)
	 edge (v4)
(u2) edge (v1)
	 edge (v2)
(u3) edge (v2)
	 edge (v3)
(u4) edge (v3)
	 edge (v4)
;
		
\path [bend right]
(v1) edge (v2)
(v2) edge (v3)
(v3) edge (v4)
(v4) edge (v1)
;

\node (glabel2) [inner sep=0pt,position=270:30mm from center2] {b)};

\end{tikzpicture}
\caption{a) Example of a chordal graph whose square is not chordal.	\newline b) The graph $F_4$ - a "nonchordal sunflower" of size 4.}
\label{fig3.2}
\end{center}
\end{figure}
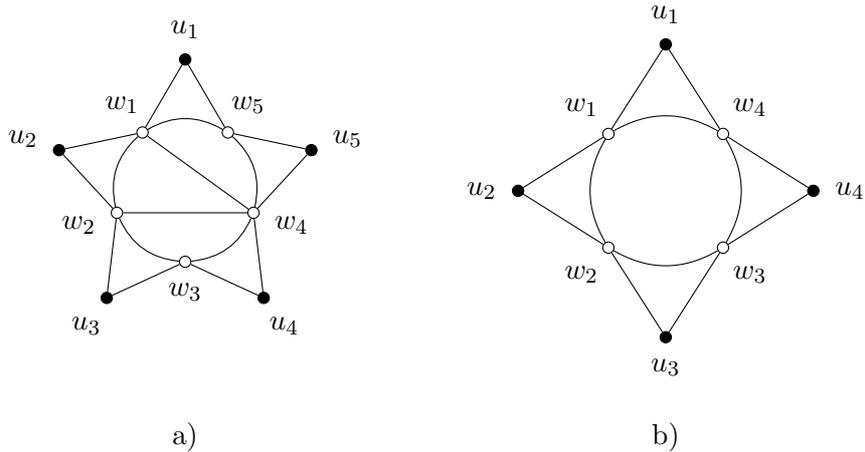

Duchet's result was utilized by Laskar and Shier to characterize the actual class of chordal graphs, closed under the taking of powers, by finding those chordal graphs whose squares would also be chordal.
In order to state their result, we need some definitions. For a subset $U\subseteq V$ we define $$G[U]:=\left(U, \{e\in E\mid e\subset U\}\right)$$ the subgraph induced by $U.$ With this notion we can rephrase the definition of chordality: $G$ is chordal if and only if it does not contain an induced cycle, i.e., there is no subset $U\subseteq V$ such that the graph $G[U]$ is isomorphic to a cycle $C$ of length $l\geq 4.$  

\begin{Definition}[Sunflower]\label{def:sunflower}
A {\em sunflower} of size $n$ is a graph $S=\left( U\cup W, E \right)$ with $U=\{u_1,\dots,u_n\}$ and $W=\{w_1,\dots,w_n\}$ such that 
\begin{itemize}
\item $G[W]$ is chordal, 
\item $U$ is a stable set, i.e., a set of pairwise nonadjacent vertices, and 
\item $u_iv_j\in E$ if and only if $j=i$ or $j=i+1\left( \!\!\!\mod n\right)$.
\end{itemize}
The family of all sunflowers of size $n$ is denoted by $\mathcal{S}_n$. A sunflower $S\in\mathcal{S}_n$ contained in some graph $G$ is called {\em suspended} if there exists a vertex $v\notin V(S)$, such that $v$ is adjacent to at least one pair of vertices $u_i$ and $u_j$ with $j\neq i\pm1\left( \!\!\!\mod n\right)$. Otherwise we say $S$ is unsuspended. (Compare Figure \ref{fig3.3}.)
\end{Definition}

\begin{figure}
\begin{center}
\begin{tikzpicture}

\node (center) [inner sep=0pt] {};

\node (1) [draw,circle,inner sep=1.5pt,fill,position=90:1.6cm from center] {};
\node (l1) [position=90:0.07cm from 1] {$w_1$};

\node (2) [draw,circle,inner sep=1.5pt,fill,position=141.4:1.6cm from center] {};
\node (l2) [position=141.4:0.07cm from 2] {$w_2$};

\node (3) [draw,circle,inner sep=1.5pt,fill,position=192.8:1.6cm from center] {};
\node (l3) [position=192:0.07cm from 3] {$w_3$};

\node (4) [draw,circle,inner sep=1.5pt,fill,position=244.2:1.6cm from center] {};
\node (l4) [position=244.2:0.07cm from 4] {$w_4$};

\node (5) [draw,circle,inner sep=1.5pt,fill,position=295.6:1.6cm from center] {};
\node (l5) [position=295.6:0.07cm from 5] {$w_5$};

\node (6) [draw,circle,inner sep=1.5pt,fill,position=347:1.6cm from center] {};
\node (l6) [position=347:0.07cm from 6] {$w_6$};

\node (7) [draw,circle,inner sep=1.5pt,fill,position=38.5:1.6cm from center] {};
\node (l7) [position=38.5:0.07cm from 7] {$w_7$};

\node (u1) [draw,circle,inner sep=1.5pt,fill,position=63.8:2.35cm from center] {};
\node (lu1) [position=63.8:0.07cm from u1] {$u_1$};

\node (u2) [draw,circle,inner sep=1.5pt,fill,position=115.2:2.35cm from center] {};
\node (lu2) [position=115.2:0.07cm from u2] {$u_2$};

\node (u3) [draw,circle,inner sep=1.5pt,fill,position=166.6:2.35cm from center] {};
\node (lu3) [position=166.6:0.07cm from u3] {$u_3$};

\node (u4) [draw,circle,inner sep=1.5pt,fill,position=218:2.35cm from center] {};
\node (lu4) [position=218:0.07cm from u4] {$u_4$};

\node (u5) [draw,circle,inner sep=1.5pt,fill,position=270:2.35cm from center] {};
\node (lu5) [position=270:0.07cm from u5] {$u_5$};

\node (u6) [draw,circle,inner sep=1.5pt,fill,position=320.8:2.35cm from center] {};
\node (lu6) [position=320.8:0.07cm from u6] {$u_6$};

\node (u7) [draw,circle,inner sep=1.5pt,fill,position=12.3:2.35cm from center] {};
\node (lu7) [position=12.3:0.07cm from u7] {$u_7$};

\path
(1) edge [bend right,bend angle=5] (2)
(2) edge [bend right,bend angle=5] (3)
(3) edge [bend right,bend angle=5] (4)
(4) edge [bend right,bend angle=5] (5)
(5) edge [bend right,bend angle=5] (6)
(6) edge [bend right,bend angle=5] (7)
(7) edge [bend right,bend angle=5] (1)
(u1) edge (1)
	 edge (7)
(u2) edge (1)
	 edge (2)
(u3) edge (2)
	 edge (3)
(u4) edge (3)
	 edge (4)
(u5) edge (4)
	 edge (5)
(u6) edge (5)
	 edge (6)
(u7) edge (6)
	 edge (7)
(3) edge (1)
(4) edge (1)
(4) edge (7)
(4) edge (6)	 
;

\node (w) [draw,circle,inner sep=1.5pt,fill,position=270:4cm from center] {};
\node (lw) [position=270:0.07 from w] {$v$};

\path
(w) edge [dotted] (u4)
(w) edge [dotted] (u6);

\end{tikzpicture}
\caption{A suspended sunflower $S\in\mathcal{S}_7$.}\label{fig3.3}
\end{center}
\end{figure}
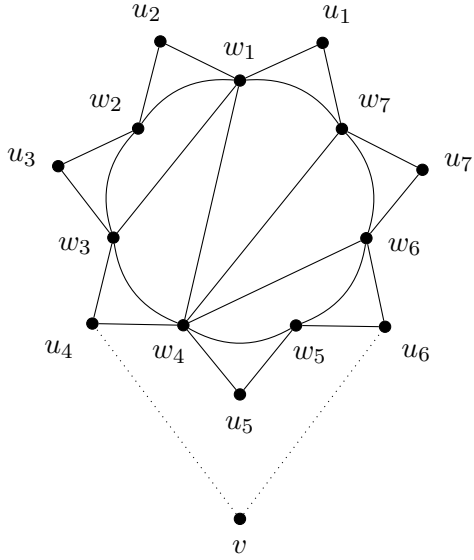

Now, we are ready to state Laskar's and Shier's result:

\begin{Theorem}\cite{LaskarShier1983powercenterchordal}\label{thm3.13}
Let $G$ be chordal. Then $G^2$ is chordal if and only if $G$ does not contain an unsuspended sunflower $S\in\mathcal{S}_n$ with $n\geq4$.
\end{Theorem}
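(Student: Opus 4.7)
The plan is to establish the two directions separately. For the direction ``unsuspended sunflower implies non-chordal square'', I would take a sunflower $S = (U \cup W, E) \in \mathcal{S}_n$ contained in $G$ with $n \geq 4$ and show that $u_1, u_2, \ldots, u_n$ forms an induced cycle of length $n$ in $G^2$. Consecutive vertices $u_i, u_{i+1}$ are $G^2$-adjacent through the common $G$-neighbor $w_{i+1}$. For a non-consecutive pair $u_i, u_j$ I would rule out $G^2$-adjacency by case analysis on a hypothetical common $G$-neighbor: there is no $G$-edge between them since $U$ is stable; a common $G$-neighbor inside $W$ would be some $w_k$, which is $G$-adjacent only to $u_{k-1}$ and $u_k$, forcing $\{i,j\}=\{k-1,k\}$ and contradicting non-consecutiveness; and a common $G$-neighbor outside $V(S)$ would be a suspending vertex, contradicting that $S$ is unsuspended. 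Hence $G^2$ contains an induced cycle of length $\geq 4$ and is not chordal.

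For the converse, assume $G^2$ is not chordal and let $C = u_1 u_2 \cdots u_n$ be a shortest induced cycle in $G^2$ of length $n \geq 4$. The plan is to extract a sunflower in $G$ from $C$. The central structural step is to prove that $U := \{u_1,\ldots,u_n\}$ is independent in $G$. Non-adjacency of non-consecutive pairs is immediate since any $G$-edge between them would already lie in $E(G^2)$ and hence be a chord of $C$. To rule out an edge $u_i u_{i+1} \in E(G)$, one first observes that $u_{i-1}u_i, u_{i+1}u_{i+2} \notin E(G)$ (otherwise $u_{i-1}u_{i+1}$ or $u_iu_{i+2}$ would be a chord in $G^2$), so common $G$-midpoints $w_i, w_{i+2}$ of length-$2$ paths exist. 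One then performs a local surgery on $C$, replacing $u_i$ by $w_i$ to obtain an auxiliary cycle of length $n$ in $G^2$. Using the minimality of $n$ and the chordality of $G$, one argues that every $G^2$-chord of this new cycle yields an induced $G^2$-cycle strictly shorter than $C$, a contradiction. This first step is the principal obstacle, as it requires a careful case analysis exploiting both the chordality of $G$ and the minimality of $C$.

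With $U$ stable in $G$, each consecutive pair $u_i, u_{i+1}$ satisfies $\operatorname{dist}_G(u_i, u_{i+1}) = 2$ and admits a common $G$-neighbor $w_{i+1}$, and I would set $W := \{w_1, \ldots, w_n\}$. The remaining sunflower axioms then follow from a uniform chord argument. The $w_i$ are pairwise distinct and disjoint from $U$, for if $w_i = w_j$ with $i \neq j$ or $w_i = u_k$, then this vertex would be a common $G$-neighbor of a non-consecutive pair inside $U$, creating a chord of $C$ in $G^2$. For $j \notin \{i, i+1\}$, an edge $u_i w_j \in E(G)$ would force both $u_i u_{j-1}$ and $u_i u_j$ into $E(G^2)$ via $w_j$, and at least one of them would again be a chord of $C$. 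Chordality of $G[W]$ is inherited from $G$ as an induced subgraph. Finally, $S := G[U \cup W]$ is unsuspended: any $v \notin V(S)$ that were $G$-adjacent to $u_i$ and $u_j$ with $j \neq i \pm 1 \pmod n$ would make $u_i u_j$ an edge of $G^2$, contradicting that $C$ is induced. Combining these steps yields the desired unsuspended sunflower $S \in \mathcal{S}_n$ with $n \geq 4$ inside $G$.
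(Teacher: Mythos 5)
First, a remark on the comparison itself: the paper does not prove Theorem~\ref{thm3.13} --- it is quoted from Laskar and Shier with a citation --- so your proposal can only be judged on its own merits. Your forward direction is essentially fine, granting that ``contain'' is read as ``contain as an induced subgraph'' (which is how the paper uses the analogous notion for flowers in Theorem~\ref{thm:squarechordal}): ruling out $G$-edges, common neighbours inside $W$, common neighbours inside $U$, and suspending vertices does show that $u_1,\dots,u_n$ induces an $n$-cycle in $G^2$. Reducing the converse to the single claim that $U$ is stable in $G$ is also the right skeleton, and your derivation of the remaining sunflower axioms from that claim is correct.

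The gap is exactly where you flag it: the stability of $U$, and the surgery you sketch does not close it. If the auxiliary cycle $u_1\cdots u_{i-1}w_iu_{i+1}\cdots u_n$ happens to be induced in $G^2$, you have produced another induced $n$-cycle and no contradiction whatsoever; if it has a chord, that chord is of the form $w_iu_j$, and the two cycles it cuts off need not contain any induced cycle of length at least four (for $n=4$ they are triangles), so minimality of $n$ yields nothing; and the chordality of $G$, which must be the engine of this direction, is never actually deployed in your sketch. A route that does work: insert a common $G$-neighbour $w_j$ for each pair $u_j,u_{j+1}$ with $u_ju_{j+1}\notin E(G)$ to obtain a cycle $C_G$ in $G$; verify that every chord of $C_G$ joins two $w$-vertices, since a $G$-edge $u_kw_j$ forces $k\in\{j,j+1\}$ and a $G$-edge between non-consecutive $u$'s is a chord of $C$ in $G^2$; then observe that if some $u_iu_{i+1}\in E(G)$, this edge lies on the cycle $C_G$ inside the chordal induced subgraph $G[V(C_G)]$, yet $u_i$ and $u_{i+1}$ have no common neighbour in $V(C_G)$ (a $w_k$-neighbour would force $k=i$, but $w_i$ was not inserted), so $u_iu_{i+1}$ lies on a cycle but on no triangle --- impossible in a chordal graph, where the shortest cycle through any edge is chordless and hence a triangle. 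This argument needs neither the surgery nor the minimality of $n$. Until the stability claim is established by some such argument, the converse direction of your proof is incomplete.
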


Combining the foregoing result with Duchet's Theorem leads to the family of chordal graphs that is closed under taking powers.

\begin{Theorem} \cite{LaskarShier1983powercenterchordal}\label{thm3.14}
Let $G$ be chordal without unsuspended sunflowers $S\in\mathcal{S}_n$ with $n\geq4$, then $G^k$ is chordal and does not contain any unsuspended sunflower $S\in\mathcal{S}_n$ with $n\geq4$ for all $k\in\N$.
\end{Theorem}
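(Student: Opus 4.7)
My plan is to bootstrap from Theorem \ref{thm3.13} and Duchet's Theorem \ref{thm3.12}, using the basic identity on iterated powers $(G^k)^2 = G^{2k}$. The chordality half is straightforward; the sunflower half I would handle by a short contradiction that feeds $G^k$ back into Theorem \ref{thm3.13}.

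\textbf{Step 1 (chordality of all powers).} By hypothesis $G$ is chordal and has no unsuspended sunflower in $\mathcal{S}_n$ for $n\geq 4$, so Theorem \ref{thm3.13} gives that $G^2$ is chordal. Since $G^1=G$ and $G^2$ are both chordal, iterating Duchet's Theorem \ref{thm3.12} starting at $k=1$ and at $k=2$ shows inductively that $G^k$ is chordal for every $k\in\N$.

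\textbf{Step 2 (no unsuspended sunflowers).} Here I would argue by contradiction. Suppose some $G^k$ contains an unsuspended sunflower $S\in\mathcal{S}_n$ with $n\geq 4$. I first check the identity $(G^k)^2=G^{2k}$: if $\operatorname{dist}_G(x,y)\leq k$ they are already adjacent in $G^k$, and if $k<\operatorname{dist}_G(x,y)\leq 2k$ then an intermediate vertex on a shortest $xy$-path splits the path into two pieces of length $\leq k$, giving an $xy$-walk of length $2$ in $G^k$; conversely any walk of length $\leq 2$ in $G^k$ yields a walk of length $\leq 2k$ in $G$. Now $G^k$ is chordal by Step 1, so Theorem \ref{thm3.13} applied with $G^k$ in place of $G$ asserts that the presence of the unsuspended sunflower forces $(G^k)^2$ to be non-chordal. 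But $(G^k)^2=G^{2k}$, and $G^{2k}$ is chordal by Step 1 — contradiction.

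\textbf{Main obstacle.} The proof itself is essentially a one-line combination of the two cited theorems, so there is no deep obstacle; the only point requiring any care is verifying $(G^k)^2=G^{2k}$, because Theorem \ref{thm3.13} must be applied to the graph $G^k$ (which requires $G^k$ to be chordal, provided by Step 1) and it is then crucial that the resulting ``forbidden'' graph $(G^k)^2$ coincides with a power of $G$ to which Step 1 applies. The hypothesis ``unsuspended sunflower of size $\geq 4$'' is preserved automatically by the contradiction hypothesis, so no extra combinatorial analysis of sunflowers inside $G^k$ is needed.
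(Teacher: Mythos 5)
Your argument is correct and is exactly the route the paper intends: the paper states Theorem \ref{thm3.14} as a cited result without proof, remarking only that it follows by ``combining'' Theorem \ref{thm3.13} with Duchet's Theorem \ref{thm3.12}, which is precisely your two steps. The identity $(G^k)^2=G^{2k}$, which you rightly single out and verify, is the correct glue that lets Theorem \ref{thm3.13} be applied to $G^k$ and compared against the chordality of $G^{2k}$ from Step 1.
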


In the following, we investigate when the square of a graph contains chordless cycles of length $l\geq 4.$ We begin with a lemma which will be extensively used in the remainder of this work. It is a slight generalization of a result from \cite{LaskarShier1980chordal}.

\begin{Lemma}\label{lem:notwoconsec}
Let $k\in\mathbb{N}$ with $k\geq2.$ If $C$ is a chordless cycle of length $l\geq 4$ in $G^k$, then $G^r$ cannot contain two consecutive edges of $C$ for all $r\leq\lfloor{\frac{k}{2}}\rfloor.$ In particular, $G^r$ contains at most $\lfloor{\frac{l}{2}}\rfloor$ edges of $C$ for all $r\leq\lfloor{\frac{k}{2}}\rfloor$.
\end{Lemma}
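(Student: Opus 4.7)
The plan is to proceed by contradiction on the first claim, then derive the second as a purely combinatorial consequence of the first.

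Assume $C = (v_1, e_1, v_2, e_2, \dots, v_l, e_l)$ is a chordless cycle of length $l \geq 4$ in $G^k$, and suppose for contradiction that for some $r \leq \lfloor k/2 \rfloor$ the power $G^r$ contains two consecutive edges of $C$, say $e_i = v_iv_{i+1}$ and $e_{i+1} = v_{i+1}v_{i+2}$ (indices mod $l$). By definition of the graph power, this means $\operatorname{dist}_G(v_i, v_{i+1}) \leq r$ and $\operatorname{dist}_G(v_{i+1}, v_{i+2}) \leq r$.

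Concatenating shortest paths in $G$ between these pairs and applying the triangle inequality yields
\[
\operatorname{dist}_G(v_i, v_{i+2}) \leq 2r \leq 2\lfloor k/2 \rfloor \leq k.
\]
Since $l \geq 4$ the vertices $v_i$ and $v_{i+2}$ are distinct (the vertices of a cycle are pairwise distinct), so $v_iv_{i+2}$ is an edge of $G^k$. But $v_{i+2}$ is not consecutive to $v_i$ on $C$, hence $v_iv_{i+2}$ is a chord, contradicting the assumption that $C$ is chordless. This establishes the first claim.

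For the second claim, view the edge set of $C$ as the edges of a cycle graph $C_l$ on the vertices $e_1, \dots, e_l$, where two edges are adjacent exactly when they share an endpoint on $C$. The first claim says that the edges of $C$ which happen to lie in $G^r$ form an independent set in this $C_l$. Since the maximum size of an independent set in $C_l$ is $\lfloor l/2 \rfloor$, at most $\lfloor l/2 \rfloor$ edges of $C$ can belong to $G^r$.

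The only potential obstacle is verifying that the constants match up, i.e.\ that $2r \leq k$ when $r \leq \lfloor k/2 \rfloor$, which holds both for even $k$ (where $2\lfloor k/2\rfloor = k$) and odd $k$ (where $2\lfloor k/2\rfloor = k-1 < k$); everything else is immediate from the definitions. Note that the hypothesis $k \geq 2$ is needed so that the range $r \leq \lfloor k/2 \rfloor$ contains at least the meaningful value $r = 1$.
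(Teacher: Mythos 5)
Your proposal is correct and follows essentially the same argument as the paper: two consecutive edges of $C$ in $G^r$ give $\operatorname{dist}_G(v_i,v_{i+2})\leq 2r\leq k$, producing a chord of $C$ in $G^k$. The independent-set argument for the second claim is a clean way to spell out what the paper leaves implicit.
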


\begin{proof}
Let $C=\left( v_1e_1\dots v_le_l\right)$ be a chordless cycle in $G^k$ of length $l\geq 4.$ Suppose there is an $1\leq r\leq\lfloor{\frac{k}{2}}\rfloor$ and an $i\in\{1,\dots,l\}$ with $\{e_i,e_{i+1\left( \!\!\!\mod l\right)}\}\subseteq E({G^r})$.
Then $$\displaystyle \text{dist}_{G}({v_{i+j\left( \!\!\!\mod l\right)}},{v_{i+1+j\left( \!\!\!\mod l\right)}})\leq r\leq\left\lfloor{\frac{k}{2}}\right\rfloor\text{ for } j=0,1$$ and, therefore, $\text{dist}_{G}({v_i},{v_{i+2\left( \!\!\!\mod l\right)}})\leq k$. Hence, the edge $v_iv_{i+2\left( \!\!\!\mod l\right)}$ exists in $G^k.$ It is a chord of $C$ in $G^k$ which is impossible.  
\end{proof}

The search for induced subgraphs that are responsible for $G^2$ not being chordal was started by Balakrishnan and Paulraja in \cite{balakrishnan1981graphsa} and \cite{balakrishnan1981graphsb}. In these papers a path on five vertices $v_1,\dots,v_5$ with added edge $v_2v_4$ is denoted by $P_5+a.$ As usual, $K_n$ is a complete graph on $n$ vertices and $K_{x,y}$ a complete bipartite (2-colorable) graph in which $x$ and $y$ are the sizes of the color classes. The special graph $K_{1,3}$ is called claw.

\begin{Theorem}\cite{balakrishnan1981graphsa}, \cite{balakrishnan1981graphsb}\label{thm3.15}
If $G$ does not contain an induced $K_{1,3}$, an induced $P_5+a$ nor any induced cycle $C$ of length $n\geq6$, then $G^2$ is chordal.	
\end{Theorem}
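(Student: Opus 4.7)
The plan is to prove the contrapositive: assume $G^2$ contains an induced cycle $C = v_1v_2\cdots v_lv_1$ of length $l\geq 4$, chosen to be of minimum length, and exhibit in $G$ an induced $K_{1,3}$, an induced $P_5+a$, or an induced cycle $C_n$ with $n\geq 6$.

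\textbf{Setup.} By Lemma~\ref{lem:notwoconsec} applied with $k=2$, no two consecutive edges of $C$ lie in $E(G)$, so at most $\lfloor l/2\rfloor$ edges of $C$ belong to $E(G)$. For each cycle edge $v_iv_{i+1}\notin E(G)$, fix a midpoint $m_i\in V(G)$ with $v_im_i,m_iv_{i+1}\in E(G)$. Replacing each such edge by its length-$2$ path gives a closed walk $W$ in $G$ of length at least $\lceil 3l/2\rceil$. The chord-freeness of $C$ in $G^2$ translates to $\text{dist}_G(v_i,v_j)\geq 3$ for all $j\notin\{i-1,i,i+1\}$; in particular no midpoint $m_i$ is $G$-adjacent to any $v_j$ with $j\notin\{i,i+1\}$, since such an edge would create a chord of $C$ in $G^2$.

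\textbf{Case $l=4$.} Split into sub-cases by the number $|I|\in\{0,1,2\}$ of cycle edges of $C$ in $E(G)$; if $|I|=2$, the two edges are nonconsecutive. In the central sub-case $|I|=2$ with, say, $v_1v_2,v_3v_4\in E(G)$ and midpoints $m,m'$ for $v_2v_3,v_4v_1$, the set $\{v_1,v_2,m,v_3,v_4,m'\}$ spans the $6$-cycle $v_1v_2mv_3v_4m'v_1$ in $G$, and the distance conditions of the setup rule out every chord except possibly $mm'$. If $mm'\notin E(G)$, this is an induced $C_6$ in $G$; if $mm'\in E(G)$, then $m$ is adjacent in $G$ to the three pairwise nonadjacent vertices $v_2,v_3,m'$, giving an induced $K_{1,3}$. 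The sub-cases $|I|\leq 1$ are handled analogously on the longer walk: distance conditions still forbid any chord between a midpoint and a non-neighbouring $v_j$, so all chords of the walk lie between midpoints, and each possible adjacency pattern locates one of the three forbidden subgraphs.

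\textbf{Cases $l=5$ and $l\geq 6$.} The walk $W$ now has length at least $\lceil 3l/2\rceil$, which is $\geq 8$ for $l=5$ and $\geq 9$ for $l\geq 6$. If $W$ has no chord in $G$, then $W$ itself is an induced cycle of length $\geq 6$ in $G$, producing the required forbidden cycle. Otherwise any chord of $W$ must lie between two midpoints (by the same argument as in Case $l=4$), and either short-circuits $W$ into a shorter chordless cycle of $G^2$ (contradicting the minimality of $l$) or, together with the adjacent $v_j$'s, reveals an induced $K_{1,3}$ or $P_5+a$ on four or five chosen vertices of $G$.

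\textbf{Main obstacle.} The conceptual core — use the distance conditions to eliminate almost every edge in $G$ among the $v_i$'s and midpoints, then exploit what remains — is easy to state; the actual work is the chord book-keeping in the small cases $l\in\{4,5\}$. Midpoints may coincide or be mutually adjacent in a handful of patterns, and each pattern must be matched to one specific forbidden induced subgraph. Ensuring exhaustiveness of these midpoint-adjacency configurations without missing a case is the principal technical hurdle.
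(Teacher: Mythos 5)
First, a point of reference: the paper does not prove Theorem~\ref{thm3.15} --- it is quoted from Balakrishnan and Paulraja --- so the only internal comparison is with the proof of the kindred Theorem~\ref{thm:chordalsq}, which uses the same device you chose: lift an induced cycle $C$ of $G^2$ to a longer cycle $W$ in $G$ by inserting midpoints, use Lemma~\ref{lem:notwoconsec} and the chord-freeness of $C$ to show each midpoint is adjacent to exactly its two designated cycle vertices, and then classify the chords of $W$. Your setup and your $l=4$, two-edge subcase are correct.

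The gap is that the one step which actually uses the hypothesis ``no induced $P_5+a$'' --- the entire point of this theorem compared with the claw-based statements surrounding it --- is precisely the step you defer to ``each possible adjacency pattern locates one of the three forbidden subgraphs'' and then list under ``Main obstacle'' as unresolved. You never exhibit the $P_5+a$. It must be exhibited, and doing so removes all case distinctions on $l$ and on the number of $G$-edges of $C$: every chord of $W$ joins two midpoints (you proved this). If $m_i$ (midpoint of $v_iv_{i+1}$) is adjacent to $m_j$ (midpoint of $v_jv_{j+1}$) with $j\neq i\pm1$, then $v_i$, $v_{i+1}$, $m_j$ are pairwise nonadjacent neighbours of $m_i$, an induced claw. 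If instead $j=i+1$, then the five vertices $v_i,m_i,v_{i+1},m_{i+1},v_{i+2}$ induce exactly the path $v_i m_i v_{i+1} m_{i+1} v_{i+2}$ together with the single extra edge $m_im_{i+1}$ --- all other pairs are nonadjacent by your distance conditions --- and this is an induced $P_5+a$. If $W$ has no chord at all, it is an induced cycle of length at least $l+\lceil l/2\rceil\geq 6$. That is the whole proof. Note also that your appeal to the minimality of $l$ in the cases $l\geq5$ is both unnecessary and unsound as written: a chord of $W$ is an edge of $G$ between two midpoints and does not in any direct way produce a shorter induced cycle of $G^2$.
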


Please note, that such a $P_5+a$ is part of the graph $F_4$ shown in Figure \ref{fig3.2} b). Flotow (see \cite{flotow1997graphs}) proved that there is no finite family of forbidden subgraphs which ensures the chordality of $G^m$ for $m\geq 2$. He assumed that such a family $\left( G_i\right)_{i\in I}$ of subgraphs would exist and constructed a graph $G$ by taking all those $G_i$ and joining all their vertices to an additional vertex $v^*$. This construction made $G^m$ complete and therefore chordal for all $m\geq 2.$ Therefore, it is impossible to find such a finite family of subgraphs. 

The next theorem is also due to Flotow.

\begin{Theorem}\cite{flotow1997graphs}\label{thm3.16}
If $G$ is chordal and does not contain an induced claw, then $G^2$ is chordal.	
\end{Theorem}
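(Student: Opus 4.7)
The plan is to invoke Theorem~\ref{thm3.13}: since $G$ is chordal, it suffices to prove that a chordal, claw-free graph contains no unsuspended sunflower $S \in \mathcal{S}_n$ with $n \geq 4$. In fact I will establish the slightly stronger statement that such a $G$ contains no sunflower of size $n \geq 4$ at all (suspended or not). So suppose for contradiction that $G$ contains a sunflower $S$ with $U = \{u_1, \dots, u_n\}$ and $W = \{w_1, \dots, w_n\}$ for some $n \geq 4$.

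The key first step is to use claw-freeness to pin down $G[W]$. Fix any $j$ and any $k \not\equiv j-1, j, j+1 \pmod n$; such $k$ exists precisely because $n \geq 4$. The vertex $w_j$ is adjacent to both $u_{j-1}$ and $u_j$ by the sunflower's edge rule, and these two $u$-vertices are non-adjacent since $U$ is stable. By the same rule, $w_k$ is adjacent to $u_i$ only for $i \in \{k-1, k\}$, so $w_k$ is adjacent to neither $u_{j-1}$ nor $u_j$. Hence if $w_j w_k$ were an edge of $G$, the set $\{w_j; u_{j-1}, u_j, w_k\}$ would induce a claw, a contradiction. Consequently $G[W]$ is a subgraph of the $n$-cycle $w_1 w_2 \dots w_n w_1$, and since $G[W]$ is chordal while $n \geq 4$, at least one edge $w_i w_{i+1}$ must be missing from $G[W]$.

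It then remains to construct a cycle in $G$ that contradicts chordality. I would start from the natural $2n$-cycle $w_1 u_1 w_2 u_2 \dots w_n u_n w_1$ of $G$, and for each $i$ with $w_i w_{i+1} \in E(G)$ shortcut the subpath $w_i u_i w_{i+1}$ via the edge $w_i w_{i+1}$, deleting $u_i$ from the walk. Writing $J = \{i : w_i w_{i+1} \in E(G)\}$, the first step gives $|J| \leq n-1$, so the resulting simple cycle $\mathcal{C}$ has length $2n - |J| \geq n+1 \geq 5$. Any chord of $\mathcal{C}$ in $G$ would have to take one of the forms $u_iu_j$ (ruled out since $U$ is stable), $w_iw_j$ with $|i-j| \geq 2 \pmod n$ (ruled out by the claw argument), $w_iw_{i+1}$ with $i\notin J$ (not an edge by definition of $J$), or $u_iw_j$ with $j\notin\{i,i+1\}$ (not an edge by the sunflower rule). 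Hence $\mathcal{C}$ is an induced cycle of length $\geq 4$ in $G$, contradicting the chordality of $G$.

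The main obstacle I expect is the first step: one has to notice that claw-freeness applied at each $w_j$, using the built-in non-adjacent pair $u_{j-1}, u_j$ as two of the leaves, forces every $W$-chord of length $\geq 2$ on the cyclic labelling to vanish and hence squeezes $G[W]$ into a proper subgraph of $C_n$. After that observation, the contradicting induced cycle arises from a routine short-circuiting of the canonical $2n$-cycle supported by the sunflower, and chord exclusion is a direct case check against the sunflower's edge rules.
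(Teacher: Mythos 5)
Your argument is correct, but note that the paper itself gives no proof of this statement --- it is quoted from Flotow --- so what you have produced is an independent derivation, routed through the Laskar--Shier characterization (Theorem~\ref{thm3.13}). Both steps check out: applying claw-freeness at $w_j$ with the stable pair $u_{j-1},u_j$ as two leaves kills every edge $w_jw_k$ with $k\not\equiv j-1,j,j+1\pmod n$, so $G[W]$ sits inside the cycle $w_1\dots w_n$ and chordality forces at least one edge $w_iw_{i+1}$ to be absent; the short-circuited cycle $\mathcal{C}$ then has length $2n-|J|\geq n+1\geq 5$, and your case analysis of potential chords ($u$--$u$ by stability, long $w$--$w$ by the claw argument, $w_iw_{i+1}$ with $i\notin J$ by definition, $u_iw_j$ with $j\notin\{i,i+1\}$ by the sunflower edge rule) is exhaustive, so $\mathcal{C}$ is chordless and contradicts the chordality of $G$. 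One thing you rely on silently: the adjacency conditions of Definition~\ref{def:sunflower} (the ``if and only if'' for $u_iw_j$, the stability of $U$) must be read as conditions on adjacency in $G$, i.e.\ the sunflower is an induced configuration; this is the intended reading (Theorem~\ref{thm3.13} would otherwise fail), but it is worth making explicit since your claw hinges on the \emph{non}-edges $u_{j-1}w_k$ and $u_jw_k$ holding in $G$. Finally, within the paper's own framework there is a shorter route you could compare against: a chordal graph contains no induced $F_4$ at all, since the inner quadrilateral $w_1w_2w_3w_4$ of $F_4$ would be an induced $C_4$ in $G$; hence the third hypothesis of Theorem~\ref{thm:chordalsq} holds vacuously and that theorem immediately yields Flotow's result. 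Your proof buys independence from Theorem~\ref{thm:chordalsq} (and reproves the result from older, cited machinery) at the price of a somewhat longer argument.
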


There seems to be some gap between graphs containing no induced cycles of length at least $4$ and of length at least $6$. The next Theorem will show that the graph $F_4$ depicted in Figure \ref{fig3.2} b) fills this gap. In its proof we will need the notion of the neighborhood of some vertex $v$ denoted by $N_G(v):=\{u\in V\mid uv \in E \}.$ 

\begin{Theorem}\label{thm:chordalsq}
If $G$
\begin{itemize}
\item does not contain an induced claw,
\item nor an induced cycle $C$ of length $l\geq5$ and 
\item any induced $F_4$ in $G$ is suspended, i.e., there exists a vertex $v$ such that the edges $vu_1$ and $vu_3$ both exist or $vu_2$ and $vu_4$ both exist, 
\end{itemize}
then $G^2$ is chordal.
\end{Theorem}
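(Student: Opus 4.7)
The plan is to argue by contradiction: suppose $G^2$ contains a chordless cycle $C=(v_1,\ldots,v_l)$ with $l\geq 4$. By Lemma~\ref{lem:notwoconsec} (applied with $k=2$, $r=1$) no two consecutive edges of $C$ lie in $E(G)$, so I let $I\subseteq\{1,\ldots,l\}$ denote the indices with $v_iv_{i+1}\notin E(G)$ and, for each $i\in I$, fix a common $G$-neighbour $w_i$ of $v_i$ and $v_{i+1}$.

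Next I would record the structural constraints imposed by the chordlessness of $C$ on the subgraph of $G$ induced by $V(C)\cup\{w_i:i\in I\}$. Routine checks yield that (i) the $w_i$'s are pairwise distinct and disjoint from $V(C)$, (ii) $w_i$ has no neighbour in $V(C)\setminus\{v_i,v_{i+1}\}$, and (iii) for $i,j\in I$ at $C$-cyclic distance at least two, $w_iw_j\notin E(G)$. In each case, a violation would produce either a chord of $C$ in $G^2$ or, in (iii), an induced claw in $G$ with centre $w_i$ and leaves $v_i,v_{i+1},w_j$, contradicting claw-freeness.

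Consider the closed walk $W$ in $G$ that traces $C$ but replaces each missing edge $v_iv_{i+1}$ (with $i\in I$) by the detour $v_iw_iv_{i+1}$; its length is $l+|I|$. By the previous paragraph the only possible chords of $W$ are edges $w_iw_{i+1}$ with $i,i+1\in I$. I would then shortcut each such chord by rewriting $v_iw_iv_{i+1}w_{i+1}v_{i+2}$ as $v_iw_iw_{i+1}v_{i+2}$, which discards exactly the vertex $v_{i+1}$. Since each $v_j$ can be discarded at most once, after exhausting every available shortcut I obtain a cycle $W^*$ in $G$ of length at least $l$; moreover $W^*$ is induced, because every chord candidate of $W$ has either been absorbed into a cycle edge or is already a non-edge of $G$ by the structural constraints above. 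Organising this shortcut bookkeeping carefully enough to certify both properties of $W^*$ simultaneously is the main obstacle.

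To conclude I distinguish on $l$. If $l\geq 5$, then $W^*$ is an induced cycle of length at least $5$ in $G$, contradicting the second hypothesis. If $l=4$, having length $|W^*|=4$ forces $|I|=4$ together with $w_1w_2,w_2w_3,w_3w_4,w_4w_1\in E(G)$; a direct inspection then confirms that $G[\{v_1,\ldots,v_4,w_1,\ldots,w_4\}]$ is isomorphic to $F_4$. By the third hypothesis this induced $F_4$ is suspended, so there exists a vertex $v$ adjacent in $G$ either to both $v_1,v_3$ or to both $v_2,v_4$; but then $v_1v_3$ or $v_2v_4$ becomes an edge of $G^2$ via the length-$2$ path through $v$, producing a chord of $C$ and the required final contradiction.
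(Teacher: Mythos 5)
Your plan is correct and follows essentially the same route as the paper: both arguments insert the midpoint vertices $w_i$ to obtain a long cycle in $G$, use claw-freeness to rule out every chord except those joining consecutive $w_i$'s, count how much such chords can shorten that cycle, and fall back on the suspended-$F_4$ hypothesis in the one surviving length-$4$ configuration. The paper simply runs this as an explicit five-subcase analysis (by $l\geq 5$ versus $l=4$ and by how many edges of the $G^2$-cycle already lie in $G$), whereas you unify it via the index set $I$ and the shortcut count; the bookkeeping you flag as the main obstacle does go through, since your facts (i)--(iii) leave $w_iw_{i+1}$ as the only chord candidates and each applied shortcut deletes a distinct $v_{i+1}$ with $i,i+1\in I$, giving $|W^*|\ge l+|I|-|I|=l$ with equality only in the $F_4$ case.
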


\begin{proof}
Let $G$ be given as above and suppose that $G^2$ is not chordal. Of course, $G^2$ contains a chordless cycle $C_{G^2}$ of length $l\geq 4$ with vertex sequence $\left( u_1\dots u_{l}\right).$ The proof is divided into two major cases. Each of them is again divided into subcases as follows:
 \begin{enumerate}
	\item[1.] Case: $l\geq 5.$
		\begin{enumerate}
		\item[] \begin{enumerate}
				
			\item [Subcase 1.1:] No edge of $C_{G^2}$ exists in $G$.
			\item [Subcase 1.2:] At least one edge of $C_{G^2}$ already exists in $G$.
		\end{enumerate}
		\end{enumerate}

	\item [2.] Case: $l=4.$
		\begin{enumerate}
		\item[] \begin{enumerate}
				
			\item [Subcase 2.1:] The edges $u_1u_2$ and $u_3u_4$ exist in $G$.
			\item [Subcase 2.2:] Just the edge $u_1u_2$ is contained in $G$.
			\item [Subcase 2.3:] $U=\{u_1,u_2,u_3,u_4\}$ is a stable set in $G,$ i.e., the vertices are pairwise nonadjacent.
		\end{enumerate}
		\end{enumerate}
\end{enumerate}

Subcase 1.1: Suppose $l\geq 5$ and no edge of $C_{G^2}$ is contained in $G$.
Hence the set $U=\{u_1,\dots,u_l\}$ of the vertices forming $C_{G^2}$ in $G^2$ forms a stable set in $G$. With $u_i$ and $u_{i\pm1\left( \!\!\!\mod c\right)}$ being adjacent in $G^2$ there exists paths of length $2$ between them in $G$ and therefore a set $W=\{w_1,\dots,w_l\}$ of additional vertices exists. The vertices in $W$ have to be disjoint and $N_{G}({w_i})\cap U=\{u_i,u_{i+1\left( \!\!\!\mod l\right)}\}$ holds for all $i\in\{1,\dots,l\}$, otherwise $C_{G^2}$ would have a chord.
This leads to a cycle $C_{G}$ of length $2l$ in $G$, alternating between vertices from $U$ and $W$. Since $l\geq 5$ this cycle must contain a chord. By the case assumption, a chord in $C_{G}$ must be of the form $w_iw_j$. If in addition $j= i\pm 1\left( \!\!\!\mod l\right)$ holds, then such a chord shortens $C_{G}$ just by $1$.

\begin{figure}
	\begin{center}
		\begin{tikzpicture}
		
		\node (center) [inner sep=1.5pt] {};
		
		\node (label) [inner sep=1.5pt,position=135:1.7cm from center] {};
		
		\node (u1) [inner sep=1.5pt,position=90:1.6cm from center,draw,circle,fill] {};
		\node (u2) [inner sep=1.5pt,position=162:1.6cm from center,draw,circle,fill] {};
		\node (u3) [inner sep=1.5pt,position=234:1.6cm from center,draw,circle,fill] {};
		\node (u4) [inner sep=1.5pt,position=306:1.6cm from center,draw,circle,fill] {};
		\node (u5) [inner sep=1.5pt,position=18:1.6cm from center,draw,circle,fill] {};
		
		\node (v1) [inner sep=1.5pt,position=126:0.8cm from center,draw,circle] {};
		\node (v2) [inner sep=1.5pt,position=198:0.8cm from center,draw,circle] {};
		\node (v3) [inner sep=1.5pt,position=270:0.8cm from center,draw,circle] {};
		\node (v4) [inner sep=1.5pt,position=342:0.8cm from center,draw,circle] {};
		\node (v5) [inner sep=1.5pt,position=54:0.8cm from center,draw,circle] {};

		\node (lu1) [position=90:0.07cm from u1] {$u_1$};
		\node (lu2) [position=162:0.07cm from u2] {$u_2$};
		\node (lu3) [position=234:0.07cm from u3] {$u_3$};
		\node (lu4) [position=306:0.07cm from u4] {$u_4$};
		\node (lu5) [position=18:0.07cm from u5] {$u_5$};
		
		\node (lv1) [position=126:0.07cm from v1] {$w_1$};
		\node (lv2) [position=198:0.07cm from v2] {$w_2$};
		\node (lv3) [position=270:0.07cm from v3] {$w_3$};
		\node (lv4) [position=342:0.07cm from v4] {$w_4$};
		\node (lv5) [position=54:0.07cm from v5] {$w_5$};

		\path
		(u1) edge (v1)
		edge (v5)
		(u2) edge (v1)
		edge (v2)
		(u3) edge (v2)
		edge (v3)
		(u4) edge (v3)
		edge (v4)
		(u5) edge (v4)
		edge (v5)
		;
		
		\path[bend right] 
		(v1) edge (v2)
		(v2) edge (v3)
		(v3) edge (v4)
		(v4) edge (v5)
		(v5) edge (v1)
		;
		
		\node (glabel2) [inner sep=0pt,position=270:30mm from center] {a)};

		\node (centerhelp) [position=0:6.2cm from center] {};
		\node (center2) [inner sep=1.5pt,position=270:1.3cm from centerhelp] {};
		\node (anchor1) [inner sep=1.5pt,position=0:0.8cm from center2] {};
		\node (anchor2) [inner sep=1.5pt,position=180:0.8cm from center2] {};
		
		\node (label) [inner sep=1.5pt,position=135:1.7cm from center2] {};
		
		\node (u1) [inner sep=1.5pt,position=90:1.6cm from anchor1,draw,circle,fill] {};
		\node (u2) [inner sep=1.5pt,position=90:1.6cm from anchor2,draw,circle,fill] {};
		\node (u3) [inner sep=1.5pt,position=180:2.5cm from center2,draw,circle,fill] {};
		\node (u4) [inner sep=1.5pt,position=0:2.5cm from center2,draw,circle,fill] {};
		
		\node (v1) [inner sep=1.5pt,position=0:0.8cm from center2,draw,circle] {};
		\node (v2) [inner sep=1.5pt,position=180:0.8cm from center2,draw,circle] {};

		\node (lu1) [position=45:0.07cm from u1] {$u_1$};
		\node (lu2) [position=135:0.07cm from u2] {$u_2$};
		\node (lu3) [position=150:0.07cm from u3] {$u_3$};
		\node (lu4) [position=30:0.07cm from u4] {$u_l$};
		
		\node (lv1) [position=270:0.07cm from v1] {$w_1$};
		\node (lv2) [position=270:0.07cm from v2] {$w_2$};

		\path
		(u1) edge (v1)
		(u2) edge (u1)
		edge (v2)
		(u3) edge (v2)
		(u4) edge (v1)
		;
		
		\node (glabel2) [inner sep=0pt,position=270:30mm from centerhelp] {b)};
				
		\end{tikzpicture}
		\caption{a) Shortened cycle in the case $l=5$.\newline b) Direct surroundings of the edge $u_1u_2$ in $G$.}
		\label{fig3.5}
	\end{center}
\end{figure}
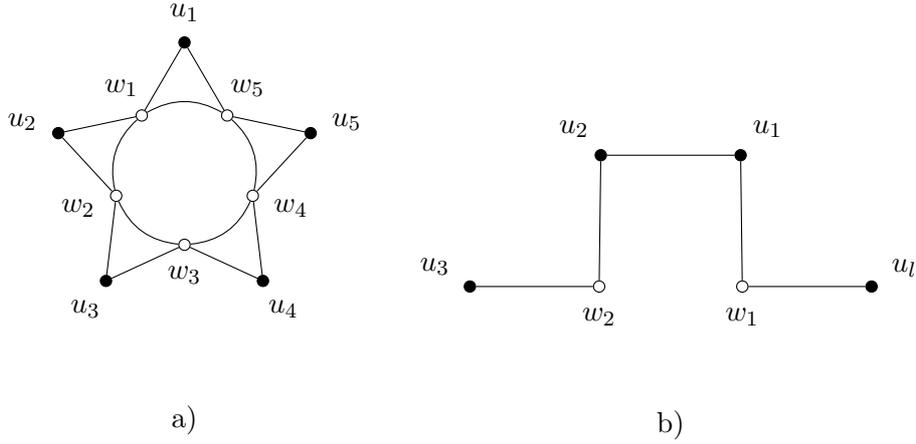 
Even if all those chords exist there is still a cycle of length $\tilde{l}\geq 5$ remaining (compare Figure \ref{fig3.5} a)). Hence, a further chord $w_iw_j$ with $j\neq i\pm1\left( \!\!\!\mod l\right)$ must exist. Therefore, we obtain ${G}[{\{w_i,w_j,u_i,u_{i+1\left(\!\!\!\mod l\right)} \}}]\cong K_{1,3}$ which is a contradiction.

Subcase 1.2:  Suppose $l\geq 5$ and at least one edge of $C_{G^2}$ is contained in $G$.
W.l.o.g. suppose the edge $u_1u_2$ exists in $G$, then by Lemma \autoref{lem:notwoconsec} the edges $u_1u_l$ and $u_2u_3$ cannot exist (compare Figure \ref{fig3.5} b)), and we need $q$ additional vertices $W=\{w_1,\dots,w_q\}.$ These vertices form paths of length $2$ between vertices in $U$ that are not adjacent in $G$ but consecutive in $C_{G^2}$. Again by Lemma \autoref{lem:notwoconsec} $q\geq\lceil{\frac{l}{2}}\rceil$ holds and a cycle $C_{G}$ of length $\tilde{l}\geq l+\lceil{\frac{l}{2}}\rceil\geq 8$ exists in $G$. 
$C_{G}$ must contain a chord. As in the last case, the possible chords join vertices in $W$ (otherwise $C_{G^2}$ contains chords). If there are only chords $w_1w_j$ or $w_2w_j$ with $j\geq 3$ the arising shorter cycles containing $w_1$ and $w_2$ have at least length $5$. Hence, the edge $w_1w_2$ must exist in $G$ and ${G}[{u_1,u_l,w_1,w_2}]\cong K_{1,3}.$

Subcase 2.1: Suppose $C_{G^2}$ has the vertex sequence $\left( u_1u_2u_3u_4\right)$ and the edges $u_1u_2$ and $u_3u_4$ exist in $G$.
Then $G$ contains two additional vertices $w_1$ and $w_2$ that form a cycle of length $6$ together with the vertices of $C_{G^2}$ (compare Figure \ref{fig3.7} a)).

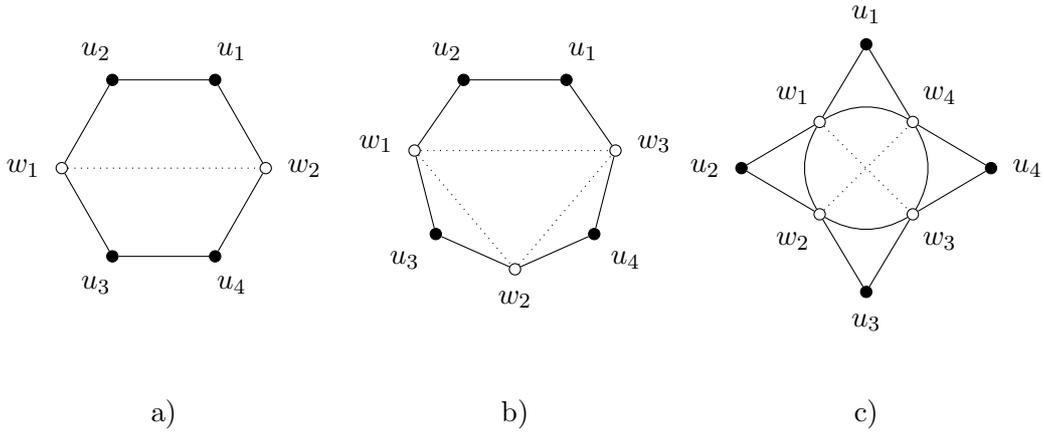
\begin{figure}[!ht]
	\begin{center}
		\begin{tikzpicture}
		
		\node (center) [inner sep=1.5pt] {};
		
		\node (label) [inner sep=1.5pt,position=135:1.7cm from center] {};
		
		\node (u1) [inner sep=1.5pt,position=60:12mm from center,draw,circle,fill] {};
		\node (u2) [inner sep=1.5pt,position=120:12mm from center,draw,circle,fill] {};
		\node (u3) [inner sep=1.5pt,position=240:12mm from center,draw,circle,fill] {};
		\node (u4) [inner sep=1.5pt,position=300:12mm from center,draw,circle,fill] {};
		
		\node (v1) [inner sep=1.5pt,position=180:12mm from center,draw,circle] {};
		\node (v2) [inner sep=1.5pt,position=0:12mm from center,draw,circle] {};

		\node (lu1) [position=60:0.07cm from u1] {$u_1$};
		\node (lu2) [position=120:0.07cm from u2] {$u_2$};
		\node (lu3) [position=240:0.07cm from u3] {$u_3$};
		\node (lu4) [position=300:0.07cm from u4] {$u_4$};
		
		\node (lv1) [position=180:0.07cm from v1] {$w_1$};
		\node (lv2) [position=0:0.07cm from v2] {$w_2$};

		\path
		(u1) edge (u2)
		edge (v2)
		(u2) edge (v1)
		(u3) edge (v1)
		edge (u4)
		(u4) edge (v2)
		;
		
		\path[dotted]
		(v1) edge (v2)
		;
		
		\node (glabel) [inner sep=0pt,position=270:30mm from center] {a)};		
		
		\node (center2) [inner sep=1.5pt,position=0:4.5cm from center] {};
		
		\node (label) [inner sep=1.5pt,position=135:1.7cm from center2] {};
		
		\node (u1) [inner sep=1.5pt,position=60:12mm from center2,draw,circle,fill] {};
		\node (u2) [inner sep=1.5pt,position=120:12mm from center2,draw,circle,fill] {};
		\node (u3) [inner sep=1.5pt,position=220:12mm from center2,draw,circle,fill] {};
		\node (u4) [inner sep=1.5pt,position=320:12mm from center2,draw,circle,fill] {};
		
		\node (v1) [inner sep=1.5pt,position=170:12mm from center2,draw,circle] {};
		\node (v2) [inner sep=1.5pt,position=270:12mm from center2,draw,circle] {};
		\node (v3) [inner sep=1.5pt,position=10:12mm from center2,draw,circle] {};

		\node (lu1) [position=60:0.07cm from u1] {$u_1$};
		\node (lu2) [position=120:0.07cm from u2] {$u_2$};
		\node (lu3) [position=220:0.07cm from u3] {$u_3$};
		\node (lu4) [position=320:0.07cm from u4] {$u_4$};
		
		\node (lv1) [position=170:0.07cm from v1] {$w_1$};
		\node (lv2) [position=270:0.07cm from v2] {$w_2$};
		\node (lv3) [position=10:0.07cm from v3] {$w_3$};

		\path
		(u1) edge (u2)
		(u2) edge (v1)
		(v1) edge (u3)
		(u3) edge (v2)
		(v2) edge (u4)
		(u4) edge (v3)
		(v3) edge (u1)
		;
		
		\path[dotted]
		(v1) edge (v3)
		(v1) edge (v2)
		(v2) edge (v3)
		;
		
		\node (glabel2) [inner sep=0pt,position=270:30mm from center2] {b)};

		\node (center3) [inner sep=1.5pt,position=0:4.5cm from center2] {};
		
		\node (label) [inner sep=1.5pt,position=135:1.7cm from center3] {};
		
		\node (u1) [inner sep=1.5pt,position=90:15mm from center3,draw,circle,fill] {};
		\node (u2) [inner sep=1.5pt,position=180:15mm from center3,draw,circle,fill] {};
		\node (u3) [inner sep=1.5pt,position=270:15mm from center3,draw,circle,fill] {};
		\node (u4) [inner sep=1.5pt,position=0:15mm from center3,draw,circle,fill] {};
		
		\node (v1) [inner sep=1.5pt,position=135:7mm from center3,draw,circle] {};
		\node (v2) [inner sep=1.5pt,position=225:7mm from center3,draw,circle] {};
		\node (v3) [inner sep=1.5pt,position=315:7mm from center3,draw,circle] {};
		\node (v4) [inner sep=1.5pt,position=45:7mm from center3,draw,circle] {};

		\node (lu1) [position=90:0.07cm from u1] {$u_1$};
		\node (lu2) [position=180:0.07cm from u2] {$u_2$};
		\node (lu3) [position=270:0.07cm from u3] {$u_3$};
		\node (lu4) [position=0:0.07cm from u4] {$u_4$};
		
		\node (lv1) [position=135:0.07cm from v1] {$w_1$};
		\node (lv2) [position=225:0.07cm from v2] {$w_2$};
		\node (lv3) [position=315:0.07cm from v3] {$w_3$};
		\node (lv4) [position=45:0.07cm from v4] {$w_4$};

		\path
		(u1) edge (v1)
		edge (v4)
		(u2) edge (v1)
		edge (v2)
		(u3) edge (v2)
		edge (v3)
		(u4) edge (v3)
		edge (v4)
		;
		
		\path [bend right]
		(v1) edge (v2)
		(v2) edge (v3)
		(v3) edge (v4)
		(v4) edge (v1)
		;
		
		\path[dotted]
		(v1) edge (v3)
		(v2) edge (v4)
		;
		
		\node (glabel3) [inner sep=0pt,position=270:30mm from center3] {c)};

		\end{tikzpicture}
		\caption{a) $C_{G}$ with just one possible chord.\newline b) $C_{G}$ with three possible chords.\newline c) $F_4$ with two forbidden chords.}
		\label{fig3.7}
	\end{center}
\end{figure}
The one possible chords would join $w_1$ and $w_2.$ Again we find a claw in $G$ which contradicts our assumption.

Subcase 2.2: Now, suppose that just the edge $u_1u_2$ exists in $G$.
Here, we get three additional vertices $w_1$, $w_2$ and $w_3$ that form a cycle $C_{G}$ of length $7$ in $G$ and we have three possible chords (compare Figure \ref{fig3.7} b)).
Since we have seen that the chord $w_1w_3$ would result in an induced $K_{1,3},$ $w_1w_3$ is forbidden, too. The chords $w_1w_2$ and $w_2w_3$ shorten the cycle just by $2$ and thus an induced cycle of length $5$ remains in $G$. This is impossible.

Subcase 2.3: Suppose $U=\{u_1,u_2,u_3,u_4\}$ is a stable set in $G.$
Therefore, we need four additional vertices $W=\{w_1,w_2,w_3,w_4\}$ to induce a cycle in $G^2$ (compare Figure \ref{fig3.7} c)). This results in another cycle $C_{G}$ in $G$ of length $8.$ We already know that the chords $w_1w_3$ and $w_2w_4$ result in induced claws.
This leaves the edges $w_1w_2$, $w_2w_3$, $w_3w_4$ and $w_4w_1$ as the only possible chords in $C_{G}$. Each of those edges shortens the cycle by $1$ and thus all four of them must exist. The resulting graph, as depicted in \autoref{fig3.7} c), is isomorphic to $F_4$ and cannot be suspended, otherwise $C_{G^2}$ would have a chord. This completes the proof.
\end{proof}

The foregoing theorem is still not a necessary condition for the square of a graph to be chordal. On the one hand excluding claws seems to be a problem because any power of a tree is chordal (and, obviously, the claw is a tree) but on the other hand allowing induced claws results in a lot of other possible graphs that produce cycles when squared. Consider the graphs of Figure \ref{fig3.9} as examples. Therefore, we consider a generalization of the sunflowers.

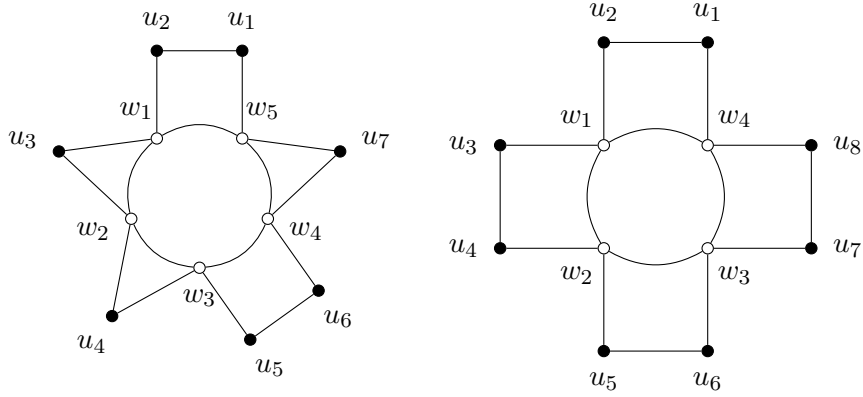
\begin{figure}[!ht]
	\begin{center}
		\begin{tikzpicture}
		
		\node (center) [inner sep=1.5pt] {};
		
		\node (v1) [inner sep=1.5pt,position=135:0.8cm from center,draw,circle] {};
		\node (v2) [inner sep=1.5pt,position=225:0.8cm from center,draw,circle] {};
		\node (v3) [inner sep=1.5pt,position=315:0.8cm from center,draw,circle] {};
		\node (v4) [inner sep=1.5pt,position=45:0.8cm from center,draw,circle] {};

		\node (u1) [inner sep=1.5pt,position=90:1.2cm from v4,draw,circle,fill] {};
		\node (u2) [inner sep=1.5pt,position=90:1.2cm from v1,draw,circle,fill] {};
		\node (u3) [inner sep=1.5pt,position=180:1.2cm from v1,draw,circle,fill] {};
		\node (u4) [inner sep=1.5pt,position=180:1.2cm from v2,draw,circle,fill] {};
		\node (u5) [inner sep=1.5pt,position=270:1.2cm from v2,draw,circle,fill] {};
		\node (u6) [inner sep=1.5pt,position=270:1.2cm from v3,draw,circle,fill] {};
		\node (u7) [inner sep=1.5pt,position=0:1.2cm from v3,draw,circle,fill] {};
		\node (u8) [inner sep=1.5pt,position=0:1.2cm from v4,draw,circle,fill] {};

		\node (lu1) [position=90:0.07cm from u1] {$u_1$};
		\node (lu2) [position=90:0.07cm from u2] {$u_2$};
		\node (lu3) [position=180:0.07cm from u3] {$u_3$};
		\node (lu4) [position=180:0.07cm from u4] {$u_4$};
		\node (lu5) [position=270:0.07cm from u5] {$u_5$};
		\node (lu6) [position=270:0.07cm from u6] {$u_6$};
		\node (lu7) [position=0:0.07cm from u7] {$u_7$};
		\node (lu8) [position=0:0.07cm from u8] {$u_8$};
		
		\node (lv1) [position=135:0.07cm from v1] {$w_1$};
		\node (lv2) [position=225:0.07cm from v2] {$w_2$};
		\node (lv3) [position=315:0.07cm from v3] {$w_3$};
		\node (lv4) [position=45:0.07cm from v4] {$w_4$};

		\path
		(u1) edge (v4)
		(u2) edge (v1)
		(u3) edge (v1)
		(u4) edge (v2)
		(u5) edge (v2)
		(u6) edge (v3)
		(u7) edge (v3)
		(u8) edge (v4)
		;
		
		\path
		(u1) edge (u2)
		(u3) edge (u4)
		(u5) edge (u6)
		(u7) edge (u8)
		;
		
		\path [bend right]
		(v1) edge (v2)
		(v2) edge (v3)
		(v3) edge (v4)
		(v4) edge (v1)
		;

		\node (center2) [inner sep=1.5pt,left of=center,node distance=6cm] {};
		
		\node (label) [inner sep=1.5pt,position=135:1.7cm from center2] {};

		\node (v1) [inner sep=1.5pt,position=126:0.8cm from center2,draw,circle] {};
		\node (v2) [inner sep=1.5pt,position=198:0.8cm from center2,draw,circle] {};
		\node (v3) [inner sep=1.5pt,position=270:0.8cm from center2,draw,circle] {};
		\node (v4) [inner sep=1.5pt,position=342:0.8cm from center2,draw,circle] {};
		\node (v5) [inner sep=1.5pt,position=54:0.8cm from center2,draw,circle] {};

		\node (u1) [inner sep=1.5pt,position=90:1cm from v5,draw,circle,fill] {};
		\node (u2) [inner sep=1.5pt,position=90:1cm from v1,draw,circle,fill] {};
		\node (u3) [inner sep=1.5pt,position=162:1.8cm from center2,draw,circle,fill] {};
		\node (u4) [inner sep=1.5pt,position=234:1.8cm from center2,draw,circle,fill] {};
		\node (u5) [inner sep=1.5pt,position=305:1cm from v3,draw,circle,fill] {};
		\node (u6) [inner sep=1.5pt,position=305:1cm from v4,draw,circle,fill] {};
		\node (u7) [inner sep=1.5pt,position=18:1.8cm from center2,draw,circle,fill] {};

		\node (lu1) [position=90:0.07cm from u1] {$u_1$};
		\node (lu2) [position=90:0.07cm from u2] {$u_2$};
		\node (lu3) [position=162:0.07cm from u3] {$u_3$};
		\node (lu4) [position=234:0.07cm from u4] {$u_4$};
		\node (lu5) [position=305:0.07cm from u5] {$u_5$};
		\node (lu6) [position=305:0.07cm from u6] {$u_6$};
		\node (lu7) [position=18:0.07cm from u7] {$u_7$};
		
		\node (lv1) [position=126:0.07cm from v1] {$w_1$};
		\node (lv2) [position=198:0.07cm from v2] {$w_2$};
		\node (lv3) [position=270:0.07cm from v3] {$w_3$};
		\node (lv4) [position=342:0.07cm from v4] {$w_4$};
		\node (lv5) [position=54:0.07cm from v5] {$w_5$};

		\path
		(u1) edge (v5)
		(u2) edge (v1)
		(u3) edge (v1)
		edge (v2)
		(u4) edge (v2)
		edge (v3)
		(u5) edge (v3)
		(u6) edge (v4)
		(u7) edge (v4)
		edge (v5)
		;
		
		\path [bend right]
		(v1) edge (v2)
		(v2) edge (v3)
		(v3) edge (v4)
		(v4) edge (v5)
		(v5) edge (v1)
		;
		
		\path
		(u1) edge (u2)
		(u5) edge (u6)
		;

		\end{tikzpicture}
	\end{center}
	\caption{Examples of graphs with nonchordal squares.}
	\label{fig3.9}
\end{figure}

\begin{Definition}[Flower]
A {\em flower} of size $n$ is a graph $F=\left( U\cup W, E \right)$ with $U=\{u_1,\dots,u_n\}$ and $W=\{w_1,\dots,w_q\}$ with $\lceil \frac{n}{2} \rceil\leq q\leq n$ satisfying the following conditions:
\begin{enumerate}[i)]
	\item There is a cycle $C$ containing all vertices of $W$ in the order $w_1,\dots,w_q$.
	
	\item The set $U=\{u_1,\dots,u_n\}$ is sorted by the appearance order of its elements along $C$ with $u_1w_q, u_2w_1\in E$ and $u_iu_j\notin E$ for $j\neq i\pm1\left(\!\!\!\mod n\right)$.
	
	\item If $w_iw_{i+1 ( \text{mod} q)}\in E(C)$, then there is exactly one $u\in U\setminus V(C)$ with ${N_F}(u)=\{w_i,w_{i+1}\}$, those vertices $u$ are called {\em pending}.
	
	\item If $w_iw_{i+1( \text{mod} q)}\notin E(C)$, then there either is one $u\in U\cap V(C)$ adjacent to $w_i$ and $w_{i+1}$, or there are exactly two vertices $u,t\in U\cap V(C)$, such that the sequence $w_iutw_{i+1}$ is part of $C$.
	
	\item The pending vertices are pairwise nonadjacent and all vertices $u\in U$ that are not pending are contained in $C$.
	
	\item There are no further edges between $U$ and $W.$
\end{enumerate}
The family of all flowers of size $n$ is denoted by $\mathcal{F}_n$.\\
If $F$ is contained in some graph $G$ and there exists an additional vertex $v$ with $vu_i, vu_j\in E(G)$ and $j\neq i\pm 1\left( \!\!\!\mod n\right)$, $F$ is called a {\em withered flower} or just {\em withered}. 	
\end{Definition}

Compare Figure \ref{fig3.9} and Figure \ref{fig3.10} for some examples of flowers. 

\begin{figure}
	\begin{center}
		\begin{tikzpicture}
		
		\node (anchor1) [] {};
		\node (anchor2) [position=0:4.7cm from anchor1] {};
		\node (anchor3) [position=0:4.7cm from anchor2] {};
		
		\node (u1) [draw,circle,fill,inner sep=1.5pt,position=90:1.6cm from anchor1] {};
		\node (u2) [draw,circle,fill,inner sep=1.5pt,position=180:1.6cm from anchor1] {};
		\node (u3) [draw,circle,fill,inner sep=1.5pt,position=270:1.6cm from anchor1] {};
		\node (u4) [draw,circle,fill,inner sep=1.5pt,position=0:1.6cm from anchor1] {};
		
		\node (w1) [draw,circle,fill,inner sep=1.5pt,position=135:0.8cm from anchor1] {};
		\node (w2) [draw,circle,fill,inner sep=1.5pt,position=225:0.8cm from anchor1] {};
		\node (w3) [draw,circle,fill,inner sep=1.5pt,position=315:0.8cm from anchor1] {};
		\node (w4) [draw,circle,fill,inner sep=1.5pt,position=45:0.8cm from anchor1] {};
		
		\node (lu1) [position=90:0.07 from u1] {$u_1$};
		\node (lu2) [position=180:0.07 from u2] {$u_2$};
		\node (lu3) [position=270:0.07 from u3] {$u_3$};
		\node (lu4) [position=0:0.07 from u4] {$u_4$};
		
		\node (lw1) [position=135:0.07cm from w1] {$w_1$};
		\node (lw2) [position=225:0.07cm from w2] {$w_2$};
		\node (lw3) [position=315:0.07cm from w3] {$w_3$};
		\node (lw4) [position=45:0.07cm from w4] {$w_4$};
		
		\node (n) [position=270:2.8cm from anchor1] {$q=4$};
		
		\path
		(u1) edge (w1)
		edge (w4)
		(u2) edge (w1)
		edge (w2)
		(u3) edge (w2)
		edge (w3)
		(u4) edge (w3)
		edge (w4)
		;
		
		\path [bend right]
		(w1) edge (w2)
		(w2) edge (w3)
		(w3) edge (w4)
		(w4) edge (w1)
		;
		
		\path
		(w2) edge (w4)
		;
		
		
		\node (u1) [draw,circle,fill,inner sep=1.5pt,position=61.42:1cm from anchor2] {};
		\node (u2) [draw,circle,fill,inner sep=1.5pt,position=112.84:1cm from anchor2] {};
		\node (u3) [draw,circle,fill,inner sep=1.5pt,position=215.68:1cm from anchor2] {};
		\node (u4) [draw,circle,fill,inner sep=1.5pt,position=318.52:1cm from anchor2] {};
		
		\node (w1) [draw,circle,fill,inner sep=1.5pt,position=164.26:1cm from anchor2] {};
		\node (w2) [draw,circle,fill,inner sep=1.5pt,position=267.1:1cm from anchor2] {};
		\node (w3) [draw,circle,fill,inner sep=1.5pt,position=10:1cm from anchor2] {};
		
		\node (lu1) [position=61.42:0.07 from u1] {$u_1$};
		\node (lu2) [position=112.84:0.07 from u2] {$u_2$};
		\node (lu3) [position=215.68:0.07 from u3] {$u_3$};
		\node (lu4) [position=318.52:0.07 from u4] {$u_4$};
		
		\node (lw1) [position=164.26:0.07cm from w1] {$w_1$};
		\node (lw2) [position=267.1:0.07cm from w2] {$w_2$};
		\node (lw3) [position=10:0.07cm from w3] {$w_3$};
		
		\node (n) [position=270:2.8cm from anchor2] {$q=3$};
		\path
		(u1) edge (u2)
		(u2) edge (w1)
		(w1) edge (u3)
		(u3) edge (w2)
		(w2) edge (u4)
		(u4) edge (w3)
		(w3) edge (u1)
		;
		
		\path [bend left]
		(w1) edge (w2)
		edge (w3)
		;
		
		
		\node (u1) [draw,circle,fill,inner sep=1.5pt,position=60:1cm from anchor3] {};
		\node (u2) [draw,circle,fill,inner sep=1.5pt,position=120:1cm from anchor3] {};
		\node (u3) [draw,circle,fill,inner sep=1.5pt,position=240:1cm from anchor3] {};
		\node (u4) [draw,circle,fill,inner sep=1.5pt,position=300:1cm from anchor3] {};
		
		\node (w1) [draw,circle,fill,inner sep=1.5pt,position=0:1cm from anchor3] {};
		\node (w2) [draw,circle,fill,inner sep=1.5pt,position=180:1cm from anchor3] {};
		
		\node (lu1) [position=60:0.07 from u1] {$u_1$};
		\node (lu2) [position=120:0.07 from u2] {$u_2$};
		\node (lu3) [position=240:0.07 from u3] {$u_3$};
		\node (lu4) [position=300:0.07 from u4] {$u_4$};
		
		\node (lw1) [position=0:0.07cm from w1] {$w_1$};
		\node (lw2) [position=180:0.07cm from w2] {$w_2$};
		
		\node (n) [position=270:2.8cm from anchor3] {$q=2$};
		
		\path
		(u1) edge (u2)
		(u2) edge (w2)
		(w1) edge (u1)
		(u3) edge (u4)
		(u4) edge (w1)
		(w2) edge (u3)
		;

		\end{tikzpicture}
	\end{center}
	\caption{Some additional examples of flowers of size $4$ with $q=2,3,4$.}
	\label{fig3.10}
\end{figure}
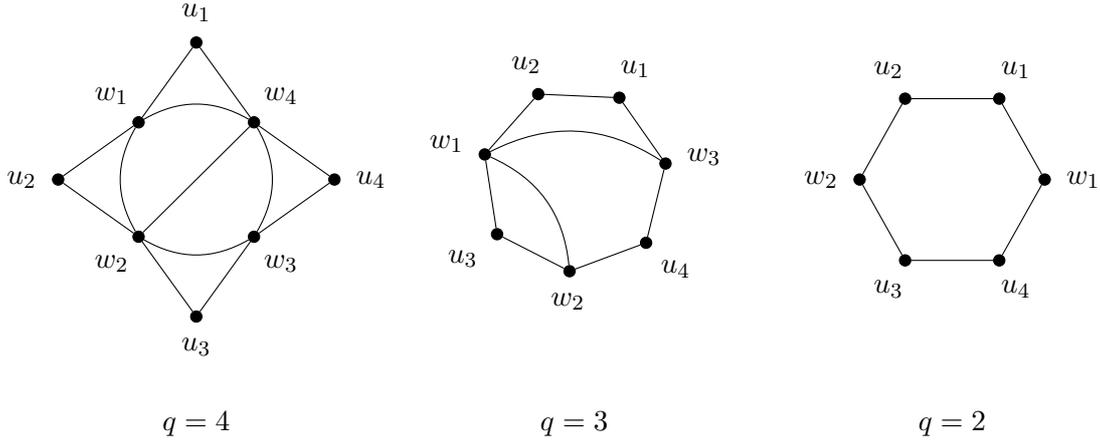

\begin{Theorem}\label{thm:squarechordal}
$G^2$ is chordal if and only if all of its induced flowers of size $n\geq4$ in $G$ are withered.
\end{Theorem}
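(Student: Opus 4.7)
The plan is to prove both implications in parallel: an induced unwithered flower of size $n \geq 4$ in $G$ will witness a chordless $n$-cycle in $G^2$ on its vertex set $U$, and conversely every chordless cycle in $G^2$ will give rise to such a flower. In both directions I would argue by contradiction, mirroring and extending the case analysis from the proof of Theorem \ref{thm:chordalsq}, but recording the intermediate $w$-vertices explicitly so as to build (or exhibit) the flower structure.

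For the sufficiency direction, suppose $F = (U \cup W, E_F)$ is an induced flower of size $n \geq 4$ that is not withered. I claim $(u_1, u_2, \ldots, u_n)$ is a chordless cycle in $G^2$. That it is a cycle is immediate from the flower structure: consecutive $u_i, u_{i+1}$ either lie in a common $2$-$u$ gap on $C$ (so $u_i u_{i+1} \in E(G)$) or are joined through a $w \in W$ on $C$ (pending or otherwise), giving $\text{dist}_{G}(u_i, u_{i+1}) \leq 2$. For chordlessness I would show that no non-consecutive pair $u_i, u_j$ has a common $G$-neighbor. Condition (ii) rules out $u_i u_j \in E(G)$. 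Every $w \in W$ has, by conditions (iii)--(vi), exactly two $U$-neighbors in $G$, and they are consecutive in the cyclic order on $U$, so no $w$ is a common neighbor of a non-consecutive pair. A common neighbor in $U$ would force two consecutive direct edges $u_{i-1} u_i$ and $u_i u_{i+1}$ in $G$, but direct edges arise only from $2$-$u$ gaps on $C$, and two such gaps are always separated on $C$ by at least one $w$-vertex, so this is impossible. A common neighbor outside $V(F)$ is exactly what ``not withered'' forbids.

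For the necessity direction, take a chordless cycle $C^{*} = (u_1, \ldots, u_n)$ in $G^2$ of length $n \geq 4$. Call an edge $u_i u_{i+1}$ of $C^{*}$ \emph{direct} if it lies in $E(G)$, and \emph{indirect} otherwise. By Lemma \ref{lem:notwoconsec} no two consecutive edges are direct, so if $q$ denotes the number of indirect edges then $\lceil n/2 \rceil \leq q \leq n$. For each indirect edge I pick an intermediate $w \in V(G)$; a short chord-forcing argument shows $w \notin U$ and that the intermediates for distinct indirect edges are pairwise distinct, since any violation produces $u_i u_k \in E(G^{2})$ for some non-consecutive pair $i,k$, contradicting chordlessness of $C^{*}$. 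I then walk around $C^{*}$, replacing each indirect edge by its two-step detour through its intermediate; the resulting closed walk is a cycle $C$ in $G$ with $U \subseteq V(C)$, and the $2$-$u$ gaps on $C$ occur precisely at the direct edges. A similar chord-forcing argument shows that no chosen $w$ has further $U$-neighbors in $G$ beyond the endpoints of its indirect edge, so conditions (i)--(vi) of the flower definition hold (the construction produces no pending vertices, so (iii) is vacuous). Finally, chordlessness of $C^{*}$ makes the flower unwithered, since any external $v$ adjacent to non-consecutive $u_i$ and $u_j$ would supply a length-$2$ path, hence a chord $u_i u_j$ of $C^{*}$ in $G^{2}$.

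The step I expect to be the most delicate is the verification in the necessity direction that the structure built is an \emph{induced} flower satisfying the labelling convention $u_1 w_q, u_2 w_1 \in E$; this requires a reindexing of $w_1, \ldots, w_q$ along $C$ relative to the cyclic order on $U$, with a small case split depending on whether the gap between $w_q$ and $w_1$ on $C$ contains one or two $u$-vertices. A secondary subtlety, relevant in both directions, is that the flower definition leaves edges internal to $W$ (beyond those of $C$) unrestricted, so ``induced'' must be read as pertaining to $U$-$W$ and $U$-$U$ edges only; this is immaterial for the chord-forcing arguments, but I want to be careful not to implicitly constrain $G[W]$ when translating between flower and chordless cycle.
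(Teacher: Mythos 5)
Your proposal is correct and follows essentially the same route as the paper's proof: an unwithered induced flower forces $\operatorname{dist}_G(u_i,u_j)\le 2$ exactly for cyclically consecutive indices, and conversely Lemma \ref{lem:notwoconsec} bounds the number of direct edges of a chordless cycle in $G^2$ so that pairwise distinct intermediate $w$-vertices can be chosen and assembled into an unwithered flower. The only cosmetic difference is that the paper reroutes the cycle $C$ through an edge $w_iw_{i+1}\in E(G)$ whenever those two $w$-vertices share a common $u$-neighbour, turning that $u$ into a pending vertex, whereas you keep every $u$ on $C$ and treat condition iii) as vacuous; both choices satisfy the definition.
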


\begin{proof}
Let $F\in\mathcal{F}_n$ be an unwithered flower of size $n$ in $G$. With $F$ not being withered it follows $\text{dist}_{G}({u_i},{u_j})\geq 3$ for all $j\neq i\pm1\left( \!\!\!\mod n\right)$ and $\text{dist}_{G}({u_i},{u_j})\leq2$ for $j = i\pm 1\left( \!\!\!\mod n\right)$. Hence the $u_i$ form an induced cycle of length $n$ in $G^2$.\\
Now let $C_{G^2}$ be an induced cycle in $G^2$ with vertex sequence $(u_1,\dots,u_n)$. By Lemma \ref{lem:notwoconsec} $G$ can contain at most $\lfloor\frac{n}{2}\rfloor$ edges of $C_{G^2}$. Hence for the other edges $u_iu_{i+1\left( \!\!\!\mod n\right)}$ in $C_{G^2}$ there must exist some vertex $w_k$ with $u_iw_k, u_{i+1\left( \!\!\!\mod n\right)}\in E({G})$ in $G$ additionally satisfying $w_ku_j\notin E({G})$ for all $j\in\{1,\dots n\}\setminus\{u_i,u_{i+1\left( \!\!\!\mod n\right)}\}$. Such vertices $w_i$ exist necessarily for every pair of consecutive vertices of $C_{G^2}$ that are not already adjacent in $G$, i.e., only those necessary $w_i$ are taken into account and there are at least $\lceil{\frac{n}{2}}\rceil$ of them.\\
The $u_i$ together with the $w_k$ form a cycle $C_G$ of length $n+\lceil{\frac{n}{2}}\rceil\leq n+q\leq 2\,n$ in which no two $w_k$ are adjacent. Now for the pending $u$-vertices we have to consider those pairs $w_i$, $w_{i+1}$ with $w_iw_{i+1}\in E({G})$ that are adjacent to a common $u$-vertex. For each such pair of $w$-vertices we shorten the cycle $C_G$ by removing their common neighbor $u$, which thus becomes a pending vertex, from it and adding the edge $w_iw_{i+1}$. The result is the cycle $C$ of condition $i)$ in the definition. No other edge between two $w$-vertices is part of $C$ and no more than two consecutive $u$-vertices appear on it due to the application of Lemma \ref{lem:notwoconsec}, thus $ii)$, $iii)$ and $iv)$ are satisfied as well.\\
In order to prevent $C_{G^2}$ from having a chord every $w_k$ may only be adjacent to those $u$ vertices adjacent to it in $C$ which yields $iv)$ and, besides the edges of $C_{G^2}$ already present in $G,$ no other edges between the vertices in $U$ can exist satisfying condition $v)$ of the definition. Therefore a flower of size $n$ that is not withered exists in $G$. If $F$ were withered $C_{G^2}$ would contain a chord in $G^2$ and therefore would not have been an induced cycle in the first place.
\end{proof}

\section{Chordal Linegraph Squares} \label{Sec:linegraphsquares}

In this section we investigate the chordality of line graph squares. We start with a basic observation:

\begin{Lemma}\label{lemma2.1}
$G$ contains a cycle $C$ of length at least four if and only if $L(G)$ contains an induced cycle $C_L$ of the same length.
\end{Lemma}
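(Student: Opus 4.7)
The plan is to argue both directions by a direct correspondence between cycles: a cycle $C$ of length $n$ in $G$ with edges $e_1,\dots,e_n$ will correspond to the induced cycle on the vertices $e_1,\dots,e_n$ in $L(G)$, and vice versa the shared endpoints along an induced cycle in $L(G)$ will reassemble a cycle in $G$.

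For the forward direction, suppose $C=(v_1,e_1,v_2,\dots,v_n,e_n)$ is a cycle of length $n\ge 4$ in $G$. I would take the set $\{e_1,\dots,e_n\}$ as vertices in $L(G)$ and argue directly: by the definition of $C$, consecutive edges $e_i,e_{i+1\;(\text{mod }n)}$ share the vertex $v_{i+1}$, so they are adjacent in $L(G)$; and because the $v_i$'s are pairwise distinct (as $C$ is a cycle), non-consecutive edges $e_i,e_j$ share no endpoint and so are non-adjacent in $L(G)$. This yields an induced cycle of length $n$ in $L(G)$.

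For the backward direction, let $C_L$ be an induced cycle on $e_1,\dots,e_n$ in $L(G)$ with $n\ge 4$. For each $i$, let $w_i$ denote the common vertex of $e_i$ and $e_{i+1\;(\text{mod }n)}$ in $G$; this is well-defined because $e_i,e_{i+1}$ are adjacent in $L(G)$, and (crucially) it is a \emph{single} vertex because $G$ is a simple graph. The key lemma to establish is that the vertices $w_1,\dots,w_n$ are pairwise distinct, which then immediately produces a cycle $w_1e_2w_2e_3\cdots w_{n-1}e_nw_ne_1w_1$ of length $n$ in $G$ (note $e_i=\{w_{i-1},w_i\}$ and non-degeneracy $w_{i-1}\ne w_i$ must also be checked).

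The main obstacle is the distinctness argument, which I would handle by case analysis: if $w_i=w_j$ for some $i\ne j$, then this vertex lies in both $e_i$ and $e_j$, so $e_i\cap e_j\ne\emptyset$. If $i$ and $j$ are non-consecutive on $C_L$, this directly contradicts $C_L$ being induced. If $i$ and $j$ are consecutive, say $j=i+1$, then $w_i=w_{i+1}$ lies in $e_i\cap e_{i+1}\cap e_{i+2}$, so $e_i\cap e_{i+2}\ne\emptyset$; since $n\ge 4$, the indices $i$ and $i+2$ are non-consecutive on $C_L$, again contradicting the induced property. An analogous argument (the special case $j=i$ formally, but treated via $w_{i-1}\ne w_i$) shows each $e_i$ has two distinct endpoints. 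The hypothesis $n\ge 4$ is used exactly to guarantee that indices at distance $2$ mod $n$ are non-adjacent on $C_L$, so smaller cycle lengths would break the argument — which matches the statement.
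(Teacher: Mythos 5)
Your proof is correct and takes essentially the same route as the paper: the forward direction observes that nonconsecutive edges of a cycle in $G$ are nonincident, so the corresponding cycle in $L(G)$ is induced, and the backward direction reassembles a cycle in $G$ from the shared endpoints $w_i$ of consecutive $e_i$. Your backward direction is in fact slightly more thorough than the paper's, which only verifies $e_{i-1}\cap e_i\neq e_i\cap e_{i+1}$, whereas you check pairwise distinctness of all the $w_i$; both checks reduce to the same observation that a coincidence would force two nonconsecutive edges of $C_L$ to intersect, contradicting inducedness (and both use $n\geq 4$ in exactly the way you point out).
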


\begin{proof}
Suppose that $G$ contains a cycle $C_{G}$. Then, there is also a cycle $C_{L}$ in $L(G).$ If it has a chord there are two nonconsecutive and incident edges of $C_{G}$ which is impossible. \\
Now, suppose that there is an induced cycle $C_{L}$ in $L(G).$ Then, there is a sequence of edges $e_1,\dots,e_n$ with $e_i\cap e_{i+1 \mod n}\neq \emptyset$ and $e_i\cap e_j=\emptyset$ for $j\neq i\pm 1 \mod n$ in $G.$ Therefore, $e_i\cap e_{i+1 \mod n}\neq e_{i-1\mod n}\cap e_{i }$ for $i=1,\dots,n$ and we obtain a cycle $C_{G}$ in $G.$   
\end{proof}

\begin{Lemma}\label{lem:cycinlsquare}
If $G$ does not contain induced cycles of length $l\geq f$, then $L(G)^2$ contains no induced cycles of length $l\geq f$.
\end{Lemma}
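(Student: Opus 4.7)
The plan is to prove the contrapositive: suppose $L(G)^2$ contains an induced cycle $C_L$ of length $l \geq f$, and produce an induced cycle of length at least $f$ in $G$. Label the vertices of $C_L$ as edges $e_1,\dots,e_l \in E(G)$ in cyclic order. Applying Lemma \ref{lem:notwoconsec} to $C_L$ with $k=2$ shows that at most $\lfloor l/2 \rfloor$ of the consecutive pairs $(e_i,e_{i+1})$ share a vertex in $G$; for each of the remaining pairs the distance in $L(G)$ equals exactly $2$, furnishing an edge $f_i \in E(G)$ adjacent in $L(G)$ to both $e_i$ and $e_{i+1}$. Moreover, non-consecutive $e_i,e_j$ lie at $L(G)$-distance at least $3$, so they are vertex-disjoint in $G$ and have no common incident edge. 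Concatenating $e_1, f_1, e_2, f_2, \dots, e_l, f_l$ (omitting each $f_i$ whenever the corresponding pair shares) yields a closed walk $W$ of length $m = 2l - s \geq \lceil 3l/2 \rceil$, where $s \leq \lfloor l/2 \rfloor$ is the number of sharing pairs. By the disjointness just observed, all $m$ vertices of $W$ are pairwise distinct, so $W$ is a genuine cycle in $G$.

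Let $H := G[V(W)]$, so that $W$ is a Hamilton cycle of $H$. The structural heart of the argument is that every edge of $H$ is \emph{local} with respect to $C_L$: if $uv \in E(H)$ with $u \in e_i$ and $v \in e_j$, then $|i-j| \leq 1$ cyclically. Otherwise $uv$ would be a length-$2$ path in $L(G)$ between $e_i$ and $e_j$, making $e_ie_j$ an edge of $L(G)^2$ and hence a chord of $C_L$, contradicting that $C_L$ is induced. For each $v \in V(W)$ choose a cyclic label $\pi(v) \in \Z/l\Z$ with $v \in e_{\pi(v)}$ (arbitrarily when $v$ lies in two consecutive $e_i$'s). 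By the local-edge property, every edge of $H$ changes $\pi$ by at most $1$ modulo $l$. Each cycle $C'$ in $H$ therefore carries a well-defined integer \emph{winding} $w(C')$ (obtained by lifting $\pi$ once around $C'$), satisfying $|C'| \geq l \cdot |w(C')|$; in particular any cycle with $w(C') \neq 0$ has length at least $l$. Since $W$ visits $e_1,\dots,e_l$ in cyclic order, one checks $w(W) = 1$.

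The induced cycle is extracted iteratively. If $W$ is already chordless in $H$, set $C^* := W$. Otherwise any chord splits $W$ into two strictly shorter cycles $C_1, C_2$ with $w(C_1)+w(C_2) = w(W) = 1$; pick the side with nonzero winding and repeat. Cycle length strictly decreases while winding remains nonzero, so after finitely many steps we arrive at a chordless cycle $C^* \subseteq H$ of length $\geq l$. As $H$ is an induced subgraph of $G$, $C^*$ is an induced cycle of $G$ of length at least $l \geq f$, contradicting the hypothesis. I expect the main technical obstacle to be the winding setup: verifying that the local-edge property still yields $|\pi(u)-\pi(v)|\leq 1$ at shared vertices (those lying in two consecutive $e_i$'s), and lifting $\pi$ consistently along $W$ to recognise $w(W)=1$. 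Once this is in place, the split-and-descend step producing $C^*$ is routine, since $w(C_1)+w(C_2)=1$ forces one side to inherit the nonzero winding and the winding-length inequality prevents the process from collapsing below length $l$.
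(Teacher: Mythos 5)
Your construction of the cycle $W$ has a genuine gap: the edge sequence $e_1,f_1,e_2,f_2,\dots$ need not be a walk in $G$. The problem arises when $e_{i-1}\cap e_i=\emptyset$ and $e_i\cap e_{i+1}=\emptyset$ and the two connecting edges $f_{i-1}$ and $f_i$ meet $e_i$ in the \emph{same} endpoint $a$. Nothing in your setup excludes this: $f_{i-1}$ and $f_i$ sharing $a$ only yields the $L(G)$-path $e_{i-1},f_{i-1},f_i,e_{i+1}$ of length $3$, so no chord of $C_L$ is created. A concrete instance is a $4$-cycle $w_1w_2w_3w_4$ with a pendant vertex $u_i$ attached to each $w_i$: the four pendant edges $e_i=u_iw_i$ induce a $C_4$ in $L(G)^2$, each consecutive pair is joined through the cycle edge $f_i=w_1w_2$ etc., and both $f_{i-1}$ and $f_i$ attach to $e_i$ at $w_i$. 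Here $s=0$, so your claim would produce a cycle of length $8$ on $8$ distinct vertices in a graph whose $u_i$ all have degree one --- the concatenation reaches $a=w_i$, runs out to $u_i$, and has no way back to $f_i$ except by repeating the edge $e_i$ and the vertex $w_i$. So ``$W$ is a genuine cycle in $G$'' is false in general, and with it the length count $m=2l-s$. This is exactly the configuration the paper's flower and sprout definitions isolate as \emph{pending} vertices and edges, and it is why the paper's cycle $C_G$ has length $l+k$ rather than $2l-s$.

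The gap is repairable: whenever $f_{i-1}$ and $f_i$ attach to $e_i$ at the same vertex, drop $e_i$ from the walk (it becomes pending) and pass directly from $f_{i-1}$ to $f_i$ through the shared vertex. A dropped $e_i$ forces both adjacent gaps to be non-sharing, so the number $p$ of dropped edges and the number $s$ of sharing pairs occupy disjoint sets of gaps and $s+p\leq l$; the corrected cycle has length $2l-s-p\geq l$, all its vertices still lie in $\bigcup_i e_i$, and your labelling, local-edge property and winding argument go through unchanged. Once that is fixed, your route is genuinely different from the paper's: the paper passes through the flower characterization (Theorem \ref{thm:squarechordal}) and then argues case by case that each admissible chord of $C_G$ shortens the cycle by only one or two, so that an induced cycle of length at least $l$ survives; your winding-number descent replaces that chord analysis with a cleaner and more rigorous extraction step. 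But as written, the proof breaks at the assertion that $W$ is a cycle.
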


\begin{proof}
Let a graph $G$ without induced cycles of length $l\geq f$ be given. Suppose that $L({G})^2$ contains an induced cycle $C_{L^2}$ of length $l\geq f$ with vertex set $u,\dots,u_l$. From Theorem \ref{thm:squarechordal}, we obtain that $L(G)$ contains an unwithered and induced flower $F$ with vertex set $U\cup W$ of the same size. This flower yields a cycle $C_L$ of length $l+q$ in $L(G).$ Chords are only allowed between vertices of the set $W.$ Since $L(G)$ is a line graph, it does not contain any claws. Therefore, the only possible chords connect two consecutive $w\in W$ with exactly one $u$-vertex in between (compare case 1.2 in the proof of Theorem \ref{thm:chordalsq}). Hence, we obtain an induced cycle in $L(G)$ of length $\tilde{l}\geq l$ which yields a cycle $C_G$ of $G$ of the same length consisting of $u$- and $w$-edges. We denote its vertex set by $v_1,\dots,v_{\tilde{l}}.$ Please note, that in $C_G$ at most two $u$-edges are consecutive and every set of consecutive $u$-edges (including the sets of size one) is followed by a $w$-edge. If there are $k$ such "following" $w$-edges the length of $C_G$ is exactly $\tilde{l}=l+k.$   

$C_G$ contains chords by assumption because $G$ has no induced cycles of this length. Fortunately, most types of chords would correspond to a chord of $C_{L^2}$ respectively to a withering of the flower $F,$ namely all chords of the form $v_iv_j$
\begin{itemize}
\item with $|i-j| \mod \tilde{l} \geq 4,$
\item with $|i-j| \mod \tilde{l} = 3,$ if they are incident to edges $u_i$ and $u_j$ with $j\neq i\pm 1 \mod l,$ and 
\item with $|i-j| \mod \tilde{l} = 2,$ if they are incident to edges $u_i$ and $u_j$ with $j\neq i\pm 1 \mod l.$ 
\end{itemize}
The possible chords are depicted in Figure \ref{fig:poschords}. Apparently, every chord reduces the length of $C_G$ by one or two. Since in each situation one or two of the "following" $w$ edges (zig-zag edges) are involved, an induced cycle of length at least $l$ remains in $G$ and we end up with a contradiction.

\begin{figure}
\begin{center}
\begin{tikzpicture}

\node (center) [] {};

\node (a1) [draw,circle,fill,inner sep=1.5pt,position=270:0.1cm from center] {};
\node (a2) [draw,circle,fill,inner sep=1.5pt,position=180:1cm from a1] {};
\node (a3) [draw,circle,fill,inner sep=1.5pt,position=180:2cm from a1] {};
\node (a4) [draw,circle,fill,inner sep=1.5pt,position=180:3cm from a1] {};
\node (a5) [draw,circle,fill,inner sep=1.5pt,position=180:4cm from a1] {};
\node (a6) [draw,circle,fill,inner sep=1.5pt,position=180:5cm from a1] {};

\node (l1) [position=210:0.18cm from a1] {$w$};
\node (l2) [position=210:0.18cm from a2] {$u$};
\node (l3) [position=210:0.18cm from a3] {$w$};
\node (l4) [position=210:0.18cm from a4] {$u$};
\node (l5) [position=210:0.18cm from a5] {$w$};

\path[>=latex]
(a1) edge[->] (a2)
(a2) edge[->] (a3)
(a3) edge[decoration = {zigzag,segment length = 1mm, amplitude = 1mm}, decorate]  (a4)
(a4) edge[->] (a5)
(a5) edge[decoration = {zigzag,segment length = 1mm, amplitude = 1mm}, decorate]  (a6)
;

\path
(a2) edge [bend right,dotted] (a5)
;

\end{tikzpicture}

\begin{tikzpicture}

\node (center) [] {};

\node (a2) [draw,circle,fill,inner sep=1.5pt,position=180:1cm from a1] {};
\node (a3) [draw,circle,fill,inner sep=1.5pt,position=180:2cm from a1] {};
\node (a4) [draw,circle,fill,inner sep=1.5pt,position=180:3cm from a1] {};
\node (a5) [draw,circle,fill,inner sep=1.5pt,position=180:4cm from a1] {};
\node (a6) [draw,circle,fill,inner sep=1.5pt,position=180:5cm from a1] {};

\node (l3) [position=210:0.18cm from a3] {$w$};
\node (l4) [position=210:0.18cm from a4] {$u$};
\node (l5) [position=210:0.18cm from a5] {$w$};

\path[>=latex]
(a2) edge[->] node[below] {$u/w$} (a3) 
(a3) edge[->] (a4)
(a4) edge[->] (a5)
(a5) edge[decoration = {zigzag,segment length = 1mm, amplitude = 1mm}, decorate]  (a6)
;

\path
(a3) edge [bend right,dotted] (a5)
;

\end{tikzpicture}

\begin{tikzpicture}

\node (center) [] {};

\node (a2) [draw,circle,fill,inner sep=1.5pt,position=180:1cm from a1] {};
\node (a3) [draw,circle,fill,inner sep=1.5pt,position=180:2cm from a1] {};
\node (a4) [draw,circle,fill,inner sep=1.5pt,position=180:3cm from a1] {};
\node (a5) [draw,circle,fill,inner sep=1.5pt,position=180:4cm from a1] {};
\node (a6) [draw,circle,fill,inner sep=1.5pt,position=180:5cm from a1] {};

\node (l2) [position=210:0.18cm from a2] {$w$};
\node (l3) [position=210:0.18cm from a3] {$u$};
\node (l4) [position=210:0.18cm from a4] {$u$};
\node (l5) [position=210:0.18cm from a5] {$w$};

\path[>=latex]
(a2) edge[->] (a3) 
(a3) edge[->] (a4)
(a4) edge[->] (a5)
(a5) edge[decoration = {zigzag,segment length = 1mm, amplitude = 1mm}, decorate]  (a6)
;

\path
(a3) edge [bend right,dotted] (a5)
;

\end{tikzpicture}

\end{center}
\caption{Possible chords in $C_G$.}
\label{fig:poschords}
\end{figure}
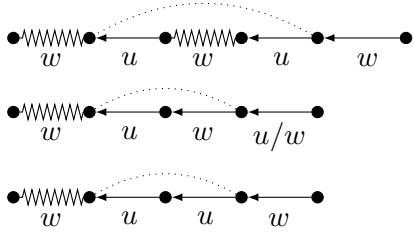

\end{proof}

In the case $f=4$, the foregoing Lemma yields a theorem due to Cameron.

\begin{Theorem}\cite{cameron1989induced}\label{thm:lgsquare}
Let $G$ be chordal, then $L(G)^2$ is chordal.
\end{Theorem}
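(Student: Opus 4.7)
The plan is to observe that this theorem is an essentially immediate corollary of Lemma \ref{lem:cycinlsquare}, so almost no new work is needed. Since $G$ is chordal, by the very definition of chordality $G$ contains no induced cycle of length $l\geq 4$. This is precisely the hypothesis of Lemma \ref{lem:cycinlsquare} with $f=4$.

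Applying that lemma, I conclude that $L(G)^2$ contains no induced cycle of length $l\geq 4$ either. That is by definition the statement that $L(G)^2$ is chordal, and the proof is complete. So the entire argument consists of quoting the lemma with $f=4$ and unpacking the definition of chordal on both sides.

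The only reason there is anything to say at all is that one should check that the hypothesis matches exactly: chordality of $G$ is equivalent to the absence of induced $C_l$ for every $l\geq 4$, which is the $f=4$ instance of ``no induced cycle of length $l\geq f$.'' There is no obstacle here; the main technical work (handling the flower/claw structure in $L(G)$ and tracing chords in the underlying cycle $C_G$) has already been carried out inside the proof of Lemma \ref{lem:cycinlsquare}. Consequently this theorem is best presented as a one-line deduction rather than an independent argument.
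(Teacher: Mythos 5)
Your proposal is correct and is exactly how the paper handles this theorem: the sentence preceding the theorem states that the case $f=4$ of Lemma \ref{lem:cycinlsquare} yields Cameron's result, which is precisely your one-line deduction after unpacking the definition of chordality on both sides.
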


In the following, we characterize the class of graphs with chordal line graph squares. Motivated by the definition of flowers we define sprouts:

\begin{Definition}[Sprout]
A {\em sprout} of size $n$ is a graph $S=\left( V,U\cup W\cup E\right)$ with $|{U}|=n$ and $|{W}|=q$, $U$, $W$ and $E$ pairwise disjoint and $\lceil{\frac{n}{2}}\rceil\leq q\leq n$ satisfying the following conditions:
\begin{enumerate}[i)]

\item There is a cycle $C$ with $E({C})\supseteq W$ containing the edges of $W$ in the order $w_1,\dots,w_q$.

\item The set $U=\{u_1,\dots,u_n\}$ is sorted by the appearance order of its elements along $C$ with $u_1\cap w_q\neq\emptyset$ and $u_2\cap w_1\neq\emptyset$, in addition $u_i\cap u_j=\emptyset$ for $j\neq i\pm 1\left( \!\!\!\mod n\right)$.

\item If $w_i\cap w_{i+1}\neq\emptyset$, then there is exactly one $u\in U$ with $w_i\cap w_{i+1}\cap u\neq\emptyset$, those edges are called {\em pending}.

\item If $w_i\cap w_{i+1}=\emptyset$, then there either is one $u\in U$ connecting $w_i$ and $w_{i+1}$ in $C$, or there are exactly two edges $t,u\in U$, such that the path $w_ituw_{i+1}$ is part of $C$.

\item The pending $u$-edges are pairwise nonadjacent and all $u$-edges that are not pending are edges of $C$.
\end{enumerate}
The family of all sprouts of size $n$ is denoted by $\mathfrak{S}_n$.\\
If a sprout $S$ contains an edge $e\in E$ connecting two non-consecutive $u$-edges, we say $S$ is {\em infertile}, otherwise $S$ is called {\em fertile}.
\end{Definition}

For some examples of sprouts compare Figure \ref{fig.16}.

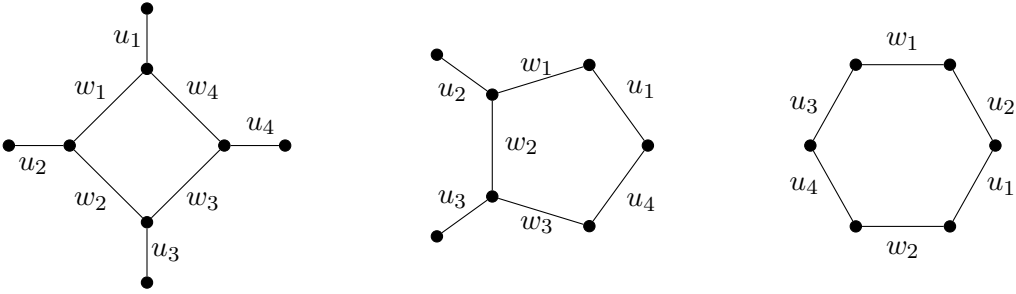
\begin{figure}[!h]
\begin{center}
\begin{tikzpicture}

\node (anchor1) [] {};
\node (anchor2) [position=0:5.2cm from anchor1] {};
\node (anchor3) [position=0:4.2cm from anchor2] {};

\node (u1) [draw,circle,fill,inner sep=1.5pt,position=90:1.6cm from anchor1] {};
\node (u2) [draw,circle,fill,inner sep=1.5pt,position=180:1.6cm from anchor1] {};
\node (u3) [draw,circle,fill,inner sep=1.5pt,position=270:1.6cm from anchor1] {};
\node (u4) [draw,circle,fill,inner sep=1.5pt,position=0:1.6cm from anchor1] {};

\node (w1) [draw,circle,fill,inner sep=1.5pt,position=90:0.8cm from anchor1] {};
\node (w2) [draw,circle,fill,inner sep=1.5pt,position=180:0.8cm from anchor1] {};
\node (w3) [draw,circle,fill,inner sep=1.5pt,position=270:0.8cm from anchor1] {};
\node (w4) [draw,circle,fill,inner sep=1.5pt,position=0:0.8cm from anchor1] {};

\node (lu1) [position=100:1.05 from anchor1] {$u_1$};
\node (lu2) [position=190:1.05 from anchor1] {$u_2$};
\node (lu3) [position=280:1.05 from anchor1] {$u_3$};
\node (lu4) [position=10:1.05 from anchor1] {$u_4$};

\node (lw1) [position=135:0.5cm from anchor1] {$w_1$};
\node (lw2) [position=225:0.5cm from anchor1] {$w_2$};
\node (lw3) [position=315:0.5cm from anchor1] {$w_3$};
\node (lw4) [position=45:0.5cm from anchor1] {$w_4$};

\path
(u1) edge (w1)
(u2) edge (w2)
(u3) edge (w3)
(u4) edge (w4)
;

\path
(w1) edge (w2)
(w2) edge (w3)
(w3) edge (w4)
(w4) edge (w1)
;



\node (v1) [draw,circle,fill,inner sep=1.5pt,position=0:0.9cm from anchor2] {};
\node (v2) [draw,circle,fill,inner sep=1.5pt,position=72:0.9cm from anchor2] {};
\node (v3) [draw,circle,fill,inner sep=1.5pt,position=144:0.9cm from anchor2] {};
\node (v4) [draw,circle,fill,inner sep=1.5pt,position=216:0.9cm from anchor2] {};
\node (v5) [draw,circle,fill,inner sep=1.5pt,position=288:0.9cm from anchor2] {};

\path
(v1) edge (v2)
(v2) edge (v3)
(v3) edge (v4)
(v4) edge (v5)
(v5) edge (v1)
;

\node (cl1) [position=36:0.7cm from anchor2] {$u_1$};
\node (cl2) [position=108:0.7cm from anchor2] {$w_1$};
\node (cl3) [position=180:0.05cm from anchor2] {$w_2$};
\node (cl4) [position=252:0.7cm from anchor2] {$w_3$};
\node (cl5) [position=324:0.7cm from anchor2] {$u_4$};


\node (a1) [draw,circle,fill,inner sep=1.5pt,position=144:1.8cm from anchor2] {};
\node (a2) [draw,circle,fill,inner sep=1.5pt,position=216:1.8cm from anchor2] {};

\path
(v3) edge (a1)
(v4) edge (a2)
;

\node (al1) [position=154:1.1cm from anchor2] {$u_2$};
\node (al2) [position=206:1.1cm from anchor2] {$u_3$};


\node (u1) [draw,circle,fill,inner sep=1.5pt,position=60:1cm from anchor3] {};
\node (u2) [draw,circle,fill,inner sep=1.5pt,position=120:1cm from anchor3] {};
\node (u3) [draw,circle,fill,inner sep=1.5pt,position=240:1cm from anchor3] {};
\node (u4) [draw,circle,fill,inner sep=1.5pt,position=300:1cm from anchor3] {};

\node (w1) [draw,circle,fill,inner sep=1.5pt,position=0:1cm from anchor3] {};
\node (w2) [draw,circle,fill,inner sep=1.5pt,position=180:1cm from anchor3] {};

\node (lu1) [position=337.5:0.9cm from anchor3] {$u_1$};
\node (lu2) [position=22.5:0.9cm from anchor3] {$u_2$};
\node (lu3) [position=157.5:0.9cm from anchor3] {$u_3$};
\node (lu4) [position=202.5:0.9cm from anchor3] {$u_4$};

\node (lw1) [position=90:1cm from anchor3] {$w_1$};
\node (lw2) [position=270:1cm from anchor3] {$w_2$};

\path
(u1) edge (u2)
(u2) edge (w2)
(w1) edge (u1)
(u3) edge (u4)
(u4) edge (w1)
(w2) edge (u3)
;

\end{tikzpicture}
\end{center}
\caption{Examples of sprouts of size $4$.}
\label{fig.16}
\end{figure}

\begin{Lemma}\label{lem:cycsprout}
$L(G)^2$ contains an induced cycle of length $l$ if and only if $G$ contains an unwithered sprout of size $l.$
\end{Lemma}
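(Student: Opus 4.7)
The plan is to apply Theorem \ref{thm:squarechordal} to the line graph $L(G)$ and then to translate the resulting flower in $L(G)$ into a sprout in $G$ through the identification $V(L(G))=E(G)$, under which adjacency in $L(G)$ corresponds to sharing an endpoint in $G$.

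For the backward direction, suppose $S$ is an unwithered (i.e.\ fertile) sprout of size $l$ in $G$ with $u$-edges $u_1,\dots,u_l$. Regarding each $u_i$ as a vertex of $L(G)$, sprout condition $ii)$ yields $u_i\cap u_j=\emptyset$ for $j\neq i\pm 1\pmod l$, so non-consecutive $u_i,u_j$ are non-adjacent in $L(G)$, and the fertility hypothesis further excludes any edge of $G$ incident with both, giving $\text{dist}_{L(G)}(u_i,u_j)\geq 3$. For consecutive pairs, the cycle $C$ of $S$ combined with conditions $iii)$--$iv)$ always supplies either a shared endpoint of $u_i$ and $u_{i+1}$ or a $w$-edge incident with both, so $\text{dist}_{L(G)}(u_i,u_{i+1})\leq 2$. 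Thus $u_1,\dots,u_l$ induce a cycle of length $l$ in $L(G)^2$.

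For the forward direction, let $C_{L^2}$ be an induced cycle of length $l$ in $L(G)^2$ with vertex sequence $(u_1,\dots,u_l)$. Theorem \ref{thm:squarechordal} applied to $L(G)$ produces an unwithered flower $F=(U\cup W,\tilde E)$ of size $l$ in $L(G)$ whose $u$-vertices realize $C_{L^2}$. Transporting $F$ through the line graph correspondence, $U$ and $W$ become sets of edges of $G$, and every adjacency relation becomes an incidence relation: the flower conditions $ii)$--$v)$ transcribe verbatim into the sprout conditions $ii)$--$v)$, the identifications $u_1\cap w_q\neq\emptyset$ and $u_2\cap w_1\neq\emptyset$ come from the corresponding flower edges, and ``$F$ unwithered'' translates into ``no edge of $G$ is incident with two non-consecutive $u$-edges'', i.e.\ fertility of the sprout. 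It remains to produce the sprout's cycle $C\subset G$ from the flower's cycle $C_F$ in $L(G)$: the vertex sequence of $C_F$ is a cyclic list of edges of $G$ in which consecutive members share a vertex, i.e.\ a closed walk in $G$. I would verify this walk is a simple cycle by showing that any repeated endpoint would force either an extra flower edge forbidden by condition $vi)$ of the flower or a chord of $C_{L^2}$ contradicting its inducedness.

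The principal obstacle is this last step, the extraction of a simple cycle $C$ in $G$ from $C_F$ together with the verification that the transported pending versus on-$C$ dichotomy remains consistent. The key tool is the claw-freeness of $L(G)$, which (as in the proof of Lemma \ref{lem:cycinlsquare}) controls the possible chords of $C_F$ in $L(G)$ and thereby rules out the problematic coincidences of endpoints in the corresponding walk in $G$.
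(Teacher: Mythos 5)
Your overall strategy---reducing the statement to Theorem \ref{thm:squarechordal} applied to $L(G)$ and then translating flowers in $L(G)$ into sprouts in $G$ via the identification $V(L(G))=E(G)$---is exactly the paper's. Your backward direction is fine and in fact a little more direct than the paper's (you show the $u$-edges induce a cycle in $L(G)^2$ immediately, rather than first assembling a flower in $L(G)$ and invoking Theorem \ref{thm:squarechordal}); the only caveat is that fertility, as defined, forbids only edges $e\in E$ \emph{of the sprout} joining non-consecutive $u$-edges, so you must additionally argue that any edge of $G$ with this property could be adjoined to $E$---a definitional subtlety the paper itself glosses over.

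The genuine gap is in the forward direction, and it sits precisely at the step you defer. You propose to read the flower's cycle $C_F$ in $L(G)$ as a closed walk in $G$ and then ``verify this walk is a simple cycle.'' But $C_F$ need not be an induced cycle of $L(G)$ (the flower definition does not forbid chords inside $W$, nor the chord joining the two $u$-vertices flanking a $w$-vertex), so Lemma \ref{lemma2.1} does not apply to it, and the walk can genuinely fail to be simple: if $u$, $w'$, $u'$ are consecutive on $C_F$ with $uu'$ a chord, the three edges are pairwise adjacent in $G$ and may meet in a single vertex, in which case the walk enters and leaves $w'$ at the same vertex of $G$. No amount of endpoint bookkeeping on $C_F$ itself repairs this. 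The paper's proof instead chooses an \emph{induced} cycle $C_L'$ inside the flower containing as many $w$-vertices as possible, uses claw-freeness of $L(G)$ together with the flower conditions to show that the only chord capable of excluding a $w$-vertex is the $u$-$u$ chord of Figure \ref{fig4.15}, and then observes that such an excluded $w'$ is redundant, so it can be deleted and one still has a flower of the same size all of whose $w$-vertices lie on $C_L'$; only after this pruning does Lemma \ref{lemma2.1} deliver the sprout's cycle in $G$. You name the right tool (claw-freeness, as in Lemma \ref{lem:cycinlsquare}), but without the maximal-induced-cycle and pruning argument the construction of the cycle $C$ required by condition $i)$ of the sprout definition---the heart of this direction---is missing.
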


\begin{proof}
We show that the existence of a fertile sprout in $G$ is equivalent to the existence of an induced and unwithered flower of the same size in $L(G).$ Then, the assertion follows directly from Theorem \ref{thm:squarechordal}.

Let $S$ be a fertile sprout with edge classes $U$, $W$ and $E$ in $G$, furthermore let $C_G$ be the cycle of the sprout definition containing all $w$-edges. This cycle may contain chords $w\in E$, which again are of the three types seen in Figure \ref{fig:poschords}. Lemma \ref{lemma2.1} yields the existence of an induced cycle $C_L'$ in $L({G})$ whose vertices correspond to the edges of $C_G$, which are completely contained in $U\cup W$.\\
Any edge $u\in U$ which is not in $C_G$ is pending and therefore forms a star together with its two adjacent $w$-edges. Hence in $L({G})$ we obtain a triangle. We choose the cycle $C_L'$ as the cycle required in $i)$ of the flower definition. With any pending edge ending up as a $u$-vertex in the line graph which is adjacent to exactly two $w$-vertices that are adjacent conditions $ii)$, $iii)$ and $v)$ are satisfied as well. At last condition $iv)$ of the sprout definition guarantees that a $u$-vertex in $C_L'$ either is adjacent to no other $u$-vertex, or to exactly one. This corresponds to condition $iv)$ of the flower definition which therefore is also satisfied. So we obtain a flower $F$ with size $|{U}|$ in $L({G})$. Any vertex in $L({G})$ responsible for $F$ being withered would correspond to an edge $e\in E$, rendering the sprout infertile.\\
So now suppose there is a non-withered flower $F$ in $L({G})$. We define $C_L'$ to be an induced cycle in $F$ containing as many $w$-vertices as possible. If not all $w$-vertices lie on $C_L'$, each cycle (for example the cycle of the flower definition containing all $u$- and $w$-vertices except the pending ones) covering all $w$-vertices contains a chord. Such a chord joins two consecutive $u$-vertices that both are not adjacent to another $u$-vertex. Otherwise it contradicts either the definition of a flower or that $L({G})$ is a line graph by generating an induced $K_{1,3}$ (compare the proof of Theorem \ref{lem:cycinlsquare}).\\
Now for the $u$-$u$ chord. If there is another chord joining the skipped $w$-vertex to another, consecutive $w$-vertex, the skipped one can be included in an induced cycle containing more $w$-vertices contradicting the maximality of $C_L'$. If there is no such chord we are in the case depicted in Figure \autoref{fig4.15}.
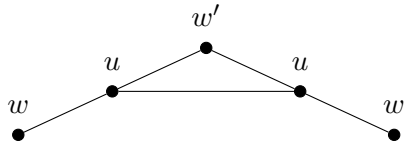
\begin{figure}
	\begin{center}
		\begin{tikzpicture}
		\node (v1) [draw,circle,fill,inner sep=1.5pt] {};
		\node (v2) [draw,circle,fill,inner sep=1.5pt,position=25:1.2cm from v1] {};
		\node (v3) [draw,circle,fill,inner sep=1.5pt,position=25:1.2cm from v2] {};
		\node (v4) [draw,circle,fill,inner sep=1.5pt,position=335:1.2cm from v3] {};
		\node (v5) [draw,circle,fill,inner sep=1.5pt,position=335:1.2cm from v4] {};
		
		\node (l1) [position=90:0.07 from v1] {$w$};
		\node (l2) [position=90:0.07 from v2] {$u$};
		\node (l3) [position=90:0.07 from v3] {$w'$};
		\node (l4) [position=90:0.07 from v4] {$u$};
		\node (l5) [position=90:0.07 from v5] {$w$};

		\path
		(v1) edge (v2)
		(v2) edge (v3)
		(v3) edge (v4)
		(v4) edge (v5)
		;
		
		\path 
		(v2) edge (v4)
		;
		
		\end{tikzpicture}
	\end{center}
	\caption{The sole legal chord excluding a $w$-vertex from $C_L'$.}
	\label{fig4.15}
\end{figure}
In this case the excluded $w$-vertex $w'$ is not necessary and $F'=\left( U\cup W\setminus \{w'\}, E'\right)$ is a flower of the same size. Hence all $w'$-vertices lie on $C_L'$ and Lemma \ref{lemma2.1} yields the existence of a cycle $C_G$ in $G$ completely consisting of all edges corresponding to the $w$-vertices in $L(G)$ and some $u$-vertices. Hence condition $i)$ of the sprout definition is satisfied.\\
The remaining $u$-vertices cannot be put into the cycle and thus they correspond to the pending edges. Hence we obtain a sprout in $G$ which is fertile since any violation of the other conditions would result in $F$ not being a flower. 
\end{proof}

\begin{Theorem}\label{thm:linegsquarechordal}
$L(G)^2$ is chordal if and only  if  $G$ does not contain induced cycles of length $l\geq 6$ and no fertile sprouts of size 4 and 5.
\end{Theorem}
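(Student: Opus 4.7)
The plan is to combine Lemmas~\ref{lem:cycinlsquare} and~\ref{lem:cycsprout} directly; both directions reduce to a dichotomy on the length of a hypothetical induced cycle in $L(G)^2$, with the short cases (length $4$ and $5$) handled by Lemma~\ref{lem:cycsprout} and the long case (length $\geq 6$) handled by Lemma~\ref{lem:cycinlsquare}.

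For the backward direction, I would assume both forbidden configurations are absent in $G$ and suppose, for contradiction, that $L(G)^2$ contains an induced cycle $C_{L^2}$ of some length $l\geq 4$. If $l\in\{4,5\}$, Lemma~\ref{lem:cycsprout} yields a fertile sprout of size $l$ in $G$, contradicting the hypothesis that no such sprout exists. If $l\geq 6$, the contrapositive of Lemma~\ref{lem:cycinlsquare} (applied with $f=6$) produces an induced cycle of length $\geq 6$ in $G$, again contradicting the hypotheses.

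For the forward direction, I would assume $L(G)^2$ is chordal. Lemma~\ref{lem:cycsprout} instantly rules out fertile sprouts of any size $n\geq 4$ in $G$, and in particular of sizes $4$ and $5$. It then remains to exclude induced cycles of length $\geq 6$ from $G$. The key observation is that any induced cycle $C$ in $G$ of length $l\geq 6$ already carries the structure of a fertile sprout of some size $n\geq 4$: for each $l\geq 6$ one can pick integers $n,q$ with $n+q=l$, $n\geq 4$, and $\lceil n/2\rceil\leq q\leq n$ (for instance, $n=\lfloor 2l/3\rfloor$ and $q=\lceil l/3\rceil$ satisfies all three constraints for every $l\geq 6$). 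Labeling $q$ edges of $C$ as $w_1,\dots,w_q$ in cyclic order and as evenly spaced as possible so that each of the $q$ cyclic gaps between consecutive $w$-edges contains either one or two of the remaining $n$ edges, and labeling those remaining edges as $u_1,\dots,u_n$ in cyclic order with $u_1$ chosen adjacent to $w_q$, produces a sprout of size $n$ satisfying conditions i)--v) with no pending $u$-edges (because no two $w$-edges share a vertex, so condition iii) is vacuous). Since $C$ is induced in $G$, the vertex set $V(C)$ spans no edges beyond those of $C$ itself; the extra edge class $E$ of the sprout is therefore empty, making it trivially fertile. This contradicts the fact that $G$ admits no fertile sprout of size $\geq 4$.

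The main obstacle is the explicit realization of an induced cycle of length $\geq 6$ as a fertile sprout together with a verification of the five sprout axioms for the chosen parameters $n$ and $q$; once this bookkeeping is done, the theorem follows from the preceding two lemmas by a clean case split.
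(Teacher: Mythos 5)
Your proposal is correct and follows essentially the same route as the paper: both directions are obtained by combining Lemma~\ref{lem:cycinlsquare} (with $f=6$) and Lemma~\ref{lem:cycsprout}, with a case split on the length of a hypothetical induced cycle in $L(G)^2$. The only difference is that you work out explicitly (via the choice $n=\lfloor 2l/3\rfloor$, $q=\lceil l/3\rceil$) why an induced cycle of length $l\geq 6$ is itself a fertile sprout of size at least $4$, a point the paper dispatches with the parenthetical remark that such cycles ``are special sprouts.''
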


\begin{proof}
From Lemma \ref{lem:cycinlsquare}, we know that forbidden induced cycles in $G$ lead to forbidden cycles in $L(G)^2.$ Therefore, if induced cycles of length $l\geq 6$ and unwithered sprouts of size 4 and 5 are forbidden, then $L(G)^2$ will be chordal.

If $L(G)^2$ is chordal, by Lemma \ref{lem:cycsprout} $G$ contains only infertile sprouts. In particular, it does not contain fertile sprouts of size 4 and 5 and induced cycles of greater length (since they are special sprouts). 
\end{proof}

\section*{Conclusion}
We investigated the chordality of graphs and line graph squares. Thereby, we gave a sufficient condition for the chordality of the square of a graph without induced cycles of length at least five. Up to our knowledge only the two sufficient conditions mentioned in Section \ref{Sec:chordalsquares} were known before.\\
The work of Laskar and Shier on squares of chordal graphs let us to the graph class of sunflowers. We generalized this concept to the graph class of flowers and used the generalization to characterize the chordality of graph squares at the end of Section \ref{Sec:chordalsquares}. By translating flowers to line graph we obtained the class of sprouts. Since line graphs contain no claws we were able to simplify the definitions slightly compared to flowers. \\    
The last result of our work, the characterization of the chordality of line graphs, can be investigated further. An investigation of sprouts of size four and five leads to the following result which we just state without proof:

\begin{Theorem}\label{thm4.15}
	 $L({G})^2$ is chordal if and only if $G$ does not contain an induced cycle of length $l\geq6$ or an induced subgraph as shown in Figure \ref{fig4.39} (the gray, dotted edges may or may not exist). 	
\end{Theorem}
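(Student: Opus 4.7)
The plan is to deduce Theorem \ref{thm4.15} from Theorem \ref{thm:linegsquarechordal} by a finite case analysis of fertile sprouts of size $4$ and $5$. Theorem \ref{thm:linegsquarechordal} already reduces the characterization to forbidding induced cycles of length $\ell\geq 6$ together with fertile sprouts of size $4$ and $5$, so the only remaining task is to enumerate all such sprouts up to isomorphism and verify that the list matches, edge for edge, the subgraphs depicted in Figure \ref{fig4.39}, with the gray dotted edges encoding precisely those edges of $G$ that may or may not be present without destroying fertility.

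First I would iterate over the allowed values of the parameter $q$, which by the definition of a sprout satisfies $\lceil n/2\rceil\leq q\leq n$. For $n=4$ this yields $q\in\{2,3,4\}$ and for $n=5$ it yields $q\in\{3,4,5\}$. For each pair $(n,q)$ I would build the cycle $C$ carrying the $q$ $w$-edges in cyclic order. Conditions iii) and iv) of the sprout definition determine, for each pair of consecutive $w$-edges along $C$, whether they share a vertex (producing a pending $u$-edge off $C$) or are separated on $C$ by one or two non-pending $u$-edges. Writing $p$ for the number of pending pairs and then balancing $p+(\text{$u$-edges on }C)=n$ with the total length of $C$ forces, for each $(n,q)$, only finitely many placements of pending versus non-pending $u$-edges along $C$; after quotienting by the dihedral symmetry of $C$, I would record one representative for each placement.

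Next, for each such skeleton I would determine which additional edges of $G$ can be present while the sprout remains fertile. Fertility forbids exactly the edges $e\in E(G)$ joining two non-consecutive $u$-edges; any other edge (for instance a chord between two $w$-edges, of the three shapes classified in Figure \ref{fig:poschords} and used in the proof of Lemma \ref{lem:cycinlsquare}) is compatible with the sprout being a sprout and fertile. These optional edges are precisely what the gray dotted edges in Figure \ref{fig4.39} encode, and I would verify case by case that the figure displays every allowed optional edge and no forbidden one. With the enumeration complete, both implications of the theorem are immediate from Theorem \ref{thm:linegsquarechordal}: the listed graphs are exactly the fertile sprouts of size $4$ and $5$ (with all possible optional edges), and induced cycles of length $\ell\geq 6$ are forbidden separately.

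The main obstacle is purely combinatorial bookkeeping: the three parameters (the size $n$, the number $q$ of $w$-edges, and the cyclic placement of pending versus non-pending $u$-edges), combined with the secondary choice of which optional chords to include, produce a moderately large case split in which it is easy to either miss a configuration or list the same one twice under two different labelings. Imposing a canonical form on $C$ (e.g.\ starting at $w_1$, walking in the direction of $w_2$, and within each $(n,q)$ class listing placements in lexicographic order on the pattern of pending/non-pending slots) keeps this under control, and once the enumeration is carried out the identification with Figure \ref{fig4.39} is mechanical.
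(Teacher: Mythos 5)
The paper itself states Theorem~\ref{thm4.15} explicitly \emph{without} proof (``An investigation of sprouts of size four and five leads to the following result which we just state without proof''), so there is no written argument to compare against; your strategy --- reduce to Theorem~\ref{thm:linegsquarechordal} and then enumerate the fertile sprouts of size $4$ and $5$ up to isomorphism, translating the freedom in the edge class $E$ into the gray dotted edges of Figure~\ref{fig4.39} --- is clearly the route the authors intend. The reduction step is legitimate and correctly identified: by Theorem~\ref{thm:linegsquarechordal} (equivalently, Lemma~\ref{lem:cycsprout} for $l=4,5$ together with Lemma~\ref{lem:cycinlsquare}), the only thing left to prove is that ``$G$ contains a fertile sprout of size $4$ or $5$'' is equivalent to ``$G$ contains one of the five configurations of Figure~\ref{fig4.39} as an induced subgraph, for some choice of the dotted edges.''

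The gap is that this remaining equivalence --- which is the entire mathematical content of the theorem --- is only planned, not carried out. Two points in particular need more than bookkeeping discipline. First, a sprout is a subgraph-with-forbidden-edges condition (fertility forbids edges of $G$ meeting two non-consecutive $u$-edges), while the theorem is phrased via \emph{induced} subgraphs; to make the translation you must argue that \emph{every} edge of $G$ on the sprout's vertex set that is not excluded by fertility or by conditions ii)--v) of the sprout definition is genuinely optional (this includes edges incident to pending edges and edges between endpoints of non-consecutive $w$-edges, not only the three chord shapes of Figure~\ref{fig:poschords}), and that adding such an edge never silently creates a smaller forbidden configuration that the figure is supposed to capture instead. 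Second, the parameter sweep over $(n,q)$ with $n\in\{4,5\}$ and $\lceil n/2\rceil\le q\le n$ gives six classes, each with several placements of pending versus non-pending $u$-edges, whereas Figure~\ref{fig4.39} shows only five graphs; collapsing the enumeration onto that list (and checking it is both complete and irredundant) is exactly where an error would hide, and nothing in the proposal certifies it. As written, this is a sound proof outline rather than a proof.
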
 

\begin{figure}[!h]
	\begin{center}
		\begin{tikzpicture}
		\node (anchor1) [] {};
		
		\node (anchor4) [position=0:5.2cm from anchor1] {};
		
		
		
		\node (v1) [draw,circle,fill,inner sep=1.5pt,position=0:0.9cm from anchor1] {};
		\node (v2) [draw,circle,fill,inner sep=1.5pt,position=72:0.9cm from anchor1] {};
		\node (v3) [draw,circle,fill,inner sep=1.5pt,position=144:0.9cm from anchor1] {};
		\node (v4) [draw,circle,fill,inner sep=1.5pt,position=216:0.9cm from anchor1] {};
		\node (v5) [draw,circle,fill,inner sep=1.5pt,position=288:0.9cm from anchor1] {};
		
		\path
		(v1) edge (v2)
		(v2) edge (v3)
		(v3) edge (v4)
		(v4) edge (v5)
		(v5) edge (v1)
		;
		
		\node (cl1) [position=36:0.7cm from anchor1] {};
		\node (cl2) [position=108:2mm from anchor1] {};
		\node (cl3) [position=180:0.05cm from anchor1] {};
		\node (cl4) [position=252:2mm from anchor1] {};
		\node (cl5) [position=324:0.7cm from anchor1] {};
		
		
		\node (a1) [draw,circle,fill,inner sep=1.5pt,position=144:1.8cm from anchor1] {};
		\node (a2) [draw,circle,fill,inner sep=1.5pt,position=216:1.8cm from anchor1] {};
		
		\path
		(v3) edge (a1)
		(v4) edge (a2)
		;
		
		\node (al1) [position=144:2cm from anchor1] {};
		\node (al2) [position=216:2cm from anchor1] {};
		
		
		\path[color=lightgray,thick,dotted]
		(a1) edge [bend right] (a2)
		;

		
		\node (u1) [draw,circle,fill,inner sep=1.5pt,position=90:1.1cm from anchor4] {};
		\node (u2) [draw,circle,fill,inner sep=1.5pt,position=150:1.1cm from anchor4] {};
		\node (u3) [draw,circle,fill,inner sep=1.5pt,position=210:1.1cm from anchor4] {};
		\node (u4) [draw,circle,fill,inner sep=1.5pt,position=270:1.1cm from anchor4] {};
		\node (u5) [draw,circle,fill,inner sep=1.5pt,position=330:1.1cm from anchor4] {};
		\node (u6) [draw,circle,fill,inner sep=1.5pt,position=30:1.1cm from anchor4] {};
		
		\node (l1) [position=120:1cm from anchor4] {};
		\node (l2) [position=180.5:1cm from anchor4] {};
		\node (l3) [position=240.5:1cm from anchor4] {};
		\node (l4) [position=300.5:1cm from anchor4] {};
		\node (l5) [position=0.5:1cm from anchor4] {};
		\node (l6) [position=60.5:1cm from anchor4] {};
		
		\path
		(u1) edge (u2)
		(u2) edge (u3)
		(u3) edge (u4)
		(u4) edge (u5)
		(u5) edge (u6)
		(u6) edge (u1)
		;
		
		\path[thick]
		(u2) edge (u6)
		;
		
		\end{tikzpicture}
	
		\begin{tikzpicture}
		
		\node (anchor1) [] {};
		\node (anchor2) [position=0:5.2cm from anchor1] {};
		\node (anchor3) [position=0:5.2cm from anchor2] {};
		
		\node (v1) [inner sep=1.5pt,position=135:0.6cm from anchor1,draw,circle] {};
		\node (v2) [inner sep=1.5pt,position=225:0.6cm from anchor1,draw,circle] {};
		\node (v3) [inner sep=1.5pt,position=315:0.6cm from anchor1,draw,circle] {};
		\node (v4) [inner sep=1.5pt,position=45:0.6cm from anchor1,draw,circle] {};
		
		\path
		(v1) edge (v2)
		(v2) edge (v3) 
		(v3) edge (v4)
		(v4) edge (v1)
		;
		
		\node (u1) [inner sep=1.5pt,position=135:1.6cm from anchor1,draw,circle,fill] {};
		\node (u2) [inner sep=1.5pt,position=225:1.6cm from anchor1,draw,circle,fill] {};
		\node (u3) [inner sep=1.5pt,position=315:1.6cm from anchor1,draw,circle,fill] {};
		\node (u4) [inner sep=1.5pt,position=45:1.6cm from anchor1,draw,circle,fill] {};
		
		\path
		(u1) edge (v1)
		(u2) edge (v2)
		(u3) edge (v3)
		(u4) edge (v4)
		;
		
		\path[color=lightgray, thick,dotted]
		(u1) edge (u2)
		(u2) edge (u3)
		(u3) edge (u4)
		(u4) edge (u1)
		
		;
		
		\node (lu1) [position=135:0.07 from u1] {};
		\node (lu2) [position=225:0.07 from u2] {};
		\node (lu3) [position=315:0.07 from u3] {};
		\node (lu4) [position=45:0.07 from u4] {};
		
		
		\node (v1) [inner sep=1.5pt,position=135:0.6cm from anchor2,draw,circle] {};
		\node (v2) [inner sep=1.5pt,position=225:0.6cm from anchor2,draw,circle] {};
		\node (v3) [inner sep=1.5pt,position=315:0.6cm from anchor2,draw,circle] {};
		\node (v4) [inner sep=1.5pt,position=45:0.6cm from anchor2,draw,circle] {};
		
		\path
		(v1) edge (v2)
		(v2) edge (v3)
		(v3) edge (v4)
		(v4) edge (v1)
		;
		
		\node (u1) [inner sep=1.5pt,position=135:1.6cm from anchor2,draw,circle,fill] {};
		\node (u2) [inner sep=1.5pt,position=225:1.6cm from anchor2,draw,circle,fill] {};
		\node (u3) [inner sep=1.5pt,position=0:1.6cm from anchor2,draw,circle,fill] {};
		\node (u4) [inner sep=1.5pt,position=0:1.6cm from anchor2,draw,circle,fill] {};
		
		\path
		(u1) edge (v1)
		(u2) edge (v2)
		(u3) edge (v3)
		(u4) edge (v4)
		;
		
		\path[color=lightgray, thick,dotted]
		(u1) edge (u2)
		
		(u1) edge (v4)
		(u2) edge (v3)
		;
		
		\node (lu1) [position=135:0.07 from u1] {};
		\node (lu2) [position=225:0.07 from u2] {};
		\node (lu3) [position=270:0.3 from u3] {};

		
		\node (v1) [inner sep=1.5pt,position=135:0.6cm from anchor3,draw,circle] {};
		\node (v2) [inner sep=1.5pt,position=225:0.6cm from anchor3,draw,circle] {};
		\node (v3) [inner sep=1.5pt,position=315:0.6cm from anchor3,draw,circle] {};
		\node (v4) [inner sep=1.5pt,position=45:0.6cm from anchor3,draw,circle] {};
		
		\path
		(v1) edge (v2)
		(v2) edge (v3)
		(v3) edge (v4)
		(v4) edge (v1)
		;
		
		\node (u1) [inner sep=1.5pt,position=180:1.6cm from anchor3,draw,circle,fill] {};
		\node (u2) [inner sep=1.5pt,position=180:1.6cm from anchor3,draw,circle,fill] {};
		\node (u3) [inner sep=1.5pt,position=0:1.6cm from anchor3,draw,circle,fill] {};
		\node (u4) [inner sep=1.5pt,position=0:1.6cm from anchor3,draw,circle,fill] {};
		
		\path
		(u1) edge (v1)
		(u2) edge (v2)
		(u3) edge (v3)
		(u4) edge (v4)
		;
		
		\node (lu1) [position=90:0.3 from u1] {};
		\node (lu3) [position=270:0.3 from u3] {};

		\end{tikzpicture}
		\vspace{-1mm}
	\end{center}
	\caption{Fives types of forbidden subgraphs.}
	\label{fig4.39}
\end{figure}
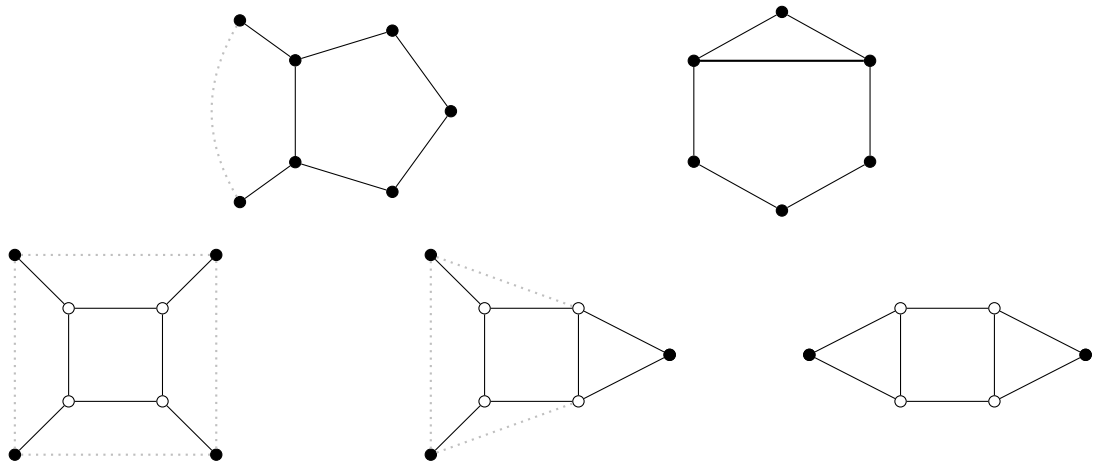

\bibliographystyle{plain}
\bibliography{bibliography}

\end{document}